\theoremstyle{plain}
\newtheorem{theorem}{Theorem}[section]
\newtheorem{lemma}[theorem]{Lemma}
\newtheorem{prop}[theorem]{Proposition}
\newtheorem{cor}[theorem]{Corollary}
\theoremstyle{remark}
\newtheorem{definition}[theorem]{Definition}
\newtheorem{assump}{Assumption}
\newtheorem*{rmk}{Remark}
\newcommand{\Z}{\ensuremath{\mathbb Z}} 
\newcommand{\diag}{\operatorname{diag}}
\begin{document}


%
%

\begin{frontmatter}
\title{Parameter Identifiability of a Multitype Pure-Birth Model of Speciation}
\runtitle{Parameter Identifiability of a Model of Speciation}

\begin{aug}
\author[A]{\fnms{Dakota}~\snm{Dragomir}\ead[label=e1]{dbdragomir@alaska.edu}},
\author[A]{\fnms{Elizabeth S.}~\snm{Allman}\ead[label=e2]{e.allman@alaska.edu}}
\and
\author[A]{\fnms{John A.}~\snm{Rhodes}\ead[label=e3]{j.rhodes@alaska.edu}}
\address[A]{Department of Mathematics and Statistics,
University of Alaska Fairbanks, Fairbanks, AK 99709\printead[presep={,\ }]{e1,e2,e3}}

\end{aug}

\begin{abstract} 
Diversification models describe the random growth of evolutionary trees, modeling the historical relationships of species through speciation and extinction events. One class of such models allows for independently changing traits, or types, of the species within the tree, upon which speciation and extinction rates depend. Although identifiability of parameters is necessary to justify parameter estimation with a model, it has not been formally established for these models, despite their adoption for inference. This work establishes generic identifiability up to label swapping for the parameters of one of the simpler forms of such a model, a multitype pure birth model of speciation, from an asymptotic distribution derived from a single tree observation as its depth goes to infinity. Crucially for applications to available data, no observation of types is needed at any internal points in the tree, nor even at the leaves.
\end{abstract}

\begin{keyword}[class=MSC]
\kwd[Primary ]{60J80}
\kwd{92D15}
\kwd[; secondary ]{60J85}
\end{keyword}

\begin{keyword}
\kwd{Diversification model}
\kwd{Multitype branching process}
\end{keyword}

\end{frontmatter}


\section{Introduction}

Species diversification models are used in Biology to make inferences about historical speciation and extinction rates over the time since a group of species, or \emph{taxa}, evolved from a common ancestor.
By providing information on rates of speciation and extinction, inference with these models seeks to give insight into the evolutionary dynamics leading to the present diversity of life.
These models have a long history, starting with Yule's constant-rate pure-birth model \cite{Yule1924}, and a fairly large literature has developed.

Diversification models describe a process beginning with  a single lineage at some time in the past, which as time progresses may speciate or go extinct. When a speciation occurs the edge bifurcates into two edges, with the number of lineages increasing by 1. When an extinction occurs, the lineage ends, and the number of lineages decreases by 1. After either event, the process continues forward, independently on all  lineages, producing a growing tree structure until the present time is reached. This tree, which has both topological and metric structure, constitutes an observation.
(In applications, it may be necessary to consider the \emph{reconstructed tree}, which is obtained by removing all tree edges with no descendents at the present \cite{Nee1994,Harvey1994}.)

Two basic sorts of these models have found common use in empirical studies. In the first, the speciation and extinction rates are functions of time, and apply to all taxon lineages present at any moment. This can be thought of as modeling exogenous factors, such as environmental conditions, that affect all taxa in the tree identically. Since all lineages behave in the same probabilistic way at any moment, it is not hard to show that the exact branching pattern of the tree-structure is irrelevant, with all the information in a tree observation being captured by the number of lineages as a function of time.  Thus the work  on \emph{time-dependent birth-death models}  by  Kendell is foundational \cite{Kendall1948}.

In the second sort of diversification model, which we call the \emph{multitype birth-death tree model}, lineages are assigned one of a finite number of \emph{types} at each moment, with the model's speciation and extinction rates dependent only on the type. Over time, however, species may change types at fixed switching rates. This models endogenous factors, such as a particular biological trait a taxon may possess, including, for instance, a morphological feature, behavior, or whether a particular gene is present and active in an organism.  A given type might correlate with faster or slower speciation than another, and/or affect the extinction rate. For these models the branching structure of a tree observation does matter, as taxa present at a given time may each have different types, and thus different tendencies to speciate or go extinct. 

The Binary State-specific Speciation and Extinction (BiSSE) model  formalized the multitype framework for biological applications \cite{Maddison2007}. Multitype (MuSSE) and quantitative-type (QuaSSE) variants of the model were subsequently proposed \cite{FitzJohn2012}. Although these works assumed the type is observed for the extant taxa at the leaves of a tree, we consider the multitype birth-death tree model with no type information observable for any lineage at any time, as type observations are unnecessary for our results. Indeed, the usefulness of these models  to infer correlation between observed types and diversification rates from data with type information for extant taxa has been called into question \cite{RaboskyGoldberg2015}.

Many other diversification models have been proposed, combining or extending these basic frameworks, with \cite{Stadler2013} offering one review. New variants continue to be developed, e.g.,  \cite{Maliet2019,Stadler2019,RasStadler2019,Barido-Sottani2020}. 

\medskip

When these models are used for inference,  the data is taken as a single tree assumed to show the true evolutionary relationships of the taxa. (In practice, this tree itself must be inferred, usually from sequence data using phylogenetic  and/or phylogenomic methods which we do not discuss here.) Multiple trees which one can reasonably hypothesize were generated with the same parameter values are simply not available. If the tree is sufficiently large, researchers hope it provides enough information to infer the speciation and extinction parameters reasonably well. More precisely, it has been implicitly assumed that the inference is statistically consistent, in the sense that as the number of taxa increases toward infinity (i.e., the tree grows larger), the probability of inferring model parameters arbitrarily close to the generating ones approaches 1. Establishing such a result, however, requires showing identifiability of the model parameters: A distribution derived from an observation of a single tree has a limit, as the number of taxa approaches infinity, that uniquely determines all parameter values. 

Of course a full proof of the statistical consistency of a particular estimator requires additional arguments. For instance,  the standard results on
the consistency of maximum likelihood
assume the availability of multiple independent samples, and therefore cannot be applied. Leroux's result on the  consistency of maximum likelihood inference from a single sequence of observation from a Hidden Markov Model \cite{Leroux1992} is analogous to what is need for applications of these diversification models. Nonetheless, establishing parameter identifiability is the first step toward this goal.

Recent work has shown that the first type of diversification model, with time-dependent rates, does not in fact have identifiable parameters, despite its widespread use by empiricists \cite{Louca2020}. This result, which holds even if one allows for identification to be based on arbitrarily many independent tree observations with the same underlying rate parameters, was compellingly illustrated by construction of examples of wildly different rate functions producing identical tree distributions. An instance of this lack of identifiability had in fact appeared earlier, in an argument in which speciation rates were modified and extinction rates set to zero without changing the model distribution \cite{Nee1994}.

Little work, however, has addressed identifiability questions for multitype birth-death tree models. 
The strongest results on parameter identifiability for a pure birth model focus on a tree's topological features but assume the types of both leaf nodes and their parents are observed \cite{Popovic2016}. In biological applications, however, the type of a leaf of the tree may be observable,  but the type of the parent nodes is virtually never known. Thus no identifiability result relevant to typical data analyses has been produced.

One might hope that the analysis of multitype birth-death tree models would be simpler than for a time-dependent rate model, as its parameter space is finite dimensional. On the other hand, while trees produced by the time-dependent rate models can be summarized by the counts of lineages through time with no loss of information, this is not true for the multitype models. Effectively extracting information from a tree with both topological and metric structure requires a new approach.

\medskip

In this paper, we investigate parameter identifiability of the \emph{multiype pure-birth tree (MPBT) model} with any finite number of types. We thus restrict extinction rates for all classes to be zero. This model has also been called the  multitype Yule model \cite{Popovic2016}. We assume only that the metric tree is observable, with no information on the types either at points internal to the tree or at the leaves. More formally, we establish \emph{generic identifiability of parameters up to label swapping}. ``Generic" means the result holds if we exclude parameters lying in a measure-zero subset of the parameter space. We give an explicit characterization of such a measure-zero exceptional set, as the zero set of a certain polynomial. ``Up to label swapping" means that there are certain symmetries of the parameter space, arising from interchanging types, so that their corresponding speciation and switching rates are also interchanged, that have no effect on the model's behavior.  Generic identifiability up to label swapping is often the strongest form of identifiability that holds in models with hidden variables \cite{Allman2009}, and since we treat the types as unobservable, its appearance here is not surprising.

Our arguments draw on several earlier works. The first is \cite{Athreya1968} on Multitype Continuous Time Markov Branching Processes. In fact, these models and the MBPT model have the same underlying structure. But much of the classical branching process literature allows only for observing type counts over time, and not for observing the tree structure indicating the branching of specific lineages.  The  MPBT model, in contrast, treats the tree structure as observable, with type information hidden. Thus while providing an important tool in this work, the results of \cite{Athreya1968} are not immediately applicable to the MPBT model.

The second result crucial to our work is a general theorem on identifiability up to label swapping of parameters of a mixture model of product distributions \cite{Allman2009}.
In applying this to the MPBT model, we consider the joint distribution of edge lengths around a node on a uniformly-at-random chosen edge of a random tree, as the random tree grows arbitrarily large. Due to conditional independence of edge lengths conditioned on the type of the shared node, this joint distribution takes the form of a mixture distribution (over types) of product distributions. Although additional work is necessary to show parameter identifiability, this theorem is a crucial ingredient in our argument.

Although we do not address the multitype birth-death tree model with non-zero extinction rates here, we believe that our approach provides a pathway toward a more general result. 

\medskip
 
This paper is structured as follows. In Section \ref{sec:overview} we provide a more formal definition of the MPBT model, and begin its analysis by deriving formulas related to the generation of a single edge in the tree in Section \ref{sec:edge}. Section \ref{sec:typecount} uses the results in \cite{Athreya1968} to obtain asymptotic results on the distribution of types across lineages in the tree at times increasingly distant from the root of the tree. Then, in Section \ref{sec:main}, we bring these ingredients together, and apply the theorem of \cite{Allman2009} to obtain our main results. Concluding remarks appear in Section \ref{sec:discuss}.


\section{Model definition}\label{sec:overview}

In this section we formalize the Multitype Pure-Birth Tree model, in a form useful for our analysis.

\smallskip
Let $m$ be a positive integer denoting the number of types, and denote the set of types by $[m]=\{1,2,\dots,m\}$.

The parameter space of the MBDT model with $m$ types is all 3-tuples $(\boldsymbol \pi, \boldsymbol \lambda, S)$ described as follows:

A \emph{root distribution} $\boldsymbol \pi=(\pi_1,\pi_2,\dots ,\pi_m)$, with $\pi_i\ge 0$, $\sum_i \pi_i=1$
gives probabilities $\pi_i$ of type $i$ being chosen for the tree root.  A vector $\boldsymbol \lambda=(\lambda_1,\lambda _2,\dots, \lambda_m)$ with non-negative entries gives \emph{speciation rates} $\lambda_i$ for type $i$. An $m\times m$ matrix
$S=(s_{ij})$ with non-negative off-diagonal entries and rows summing to 0 gives scalar \emph{type switching rates} $s_{ij}$ from type $i$ to type $j$, $i\ne j$. Note that $S$ is determined by the $m^2-m$ independent scalar switching rates.

 \subsection{The edge process model} We first describe how an edge of a tree is produced under the model. 
 As edges of the tree are produced independently conditioned on their starting types, a description of  a single edge is sufficient.
 
 We view an edge as growing with time, randomly changing the type of its leading point as it does so. At any time the edge may speciate, at a rate $\lambda_i$ determined by its current type $i$. When speciation occurs, the edge ceases to grow, and in the full model two new edge processes are started for its descendent edges. However,  in formalizing the edge process we describe the speciation of an edge as the process entering an absorbing state, for mathematical convenience.

For each type  $i\in [m]$, define two states $i_-, i_+$.
At any time, state $i_-$ indicates that the current leading point of the edge has type $i$ and that the edge has not yet speciated. The absorbing state $i_+$ represents that a speciation has occurred and at the time of speciation the leading point had type $i$.
The parameter $s_{ij}$, $i\ne j$, is thus a rate of change from state $i_-$ to state $j_-$, while $\lambda_i$ is the  rate of change from state $i_-$ to $i_+$. No other instantaneous state changes are allowed.

\begin{definition} 
\label{def1} The \emph{$m$-type pure-birth edge process} $E_{\tau}= E_\tau(\widetilde{\boldsymbol \pi},\boldsymbol \lambda, S)$ with $\widetilde \pi_i \ge0 $, $\sum_i \widetilde \pi_i=1$, is the $2m$-state continuous-time Markov process over $\tau \in[0, \infty)$ with 
states $$1_-,2_-,\dots, m_-,1_+,2_+,\dots, m_+,$$ initial state distribution $(\widetilde {\boldsymbol \pi},\mathbf 0)\in \mathbb R^{2m}$,  and $2m\times 2m$ transition rate matrix
$$Q := 
\begin{bmatrix}
S-\diag(\boldsymbol \lambda) & \diag(\boldsymbol \lambda) \\
\mathbf 0 & \mathbf 0 
\end{bmatrix},$$ 
where the rows and columns of $Q$ are ordered by states as above.  Here $\mathbf 0$ is a vector or matrix of 0s, and $\diag(\boldsymbol \lambda)$ is the diagonal matrix formed from vector $\boldsymbol \lambda$.

The transition probability matrix associated to $E_{\tau}$ is  $$P(\tau) = \exp({Q\tau}),$$ with $P_{ij}(\tau)$ giving the probability that an edge is in state $j$ at time $\tau$ given that it was in state $i$ at time $0$.

\end{definition}

\begin{definition}
The  $\emph{speciation time}$ $\mathcal T$ associated to $E_{\tau}(\widetilde{\boldsymbol \pi},\boldsymbol \lambda, S)$ is the $[0, \infty]$-valued  random variable
$$\mathcal T = \inf \left(  \{\tau \ge 0 \mid E_{\tau} \in \{1_+, 2_+,\dots, m_+\}\} \cup \infty\right).$$ 
\end{definition}

A realization of the edge process that reaches a ``+'' state is viewed as an edge of length $\mathcal T$, the time at which a speciation occurs.  Each point (time $\tau$) along the edge is ``colored" by type $i$ if the process is  in state $i_-$ (or state $i_+$ at its endpoint) at that time. 
Under mild assumptions, the edge length is finite with probability 1, as is shown below. Although for the  MPBT model colors on edges are ultimately hidden, they play an important role in our arguments.

The terminal edges of the tree are produced by terminating edge processes at a specific time, before they may have reached an absorbing state. Formally defining such a \emph{truncated edge process} and the colored edge it produces, is straightforward.
 
Due to the time-homogeneous Markov formulation of the edge process, we may equivalently produce an edge either from a single  process reaching a ``$+$'' state, or  by starting the process, truncating it before it enters a ``$+$'' state, starting a new process in the final state of the truncated one, and then conjoining the edges produced. Likewise, to produce an edge from the truncated process, we may allow the process to continue to a later time, and then truncate the edge that was produced to an initial segment.
 
\subsection{The multitype pure-birth tree model}
 
We now define the MPBT model, as a generative model producing a tree. Let $T>0$ be the depth (length of all paths from root to any tip) of the tree to be sampled.

\begin{enumerate}

\item The process begins with a root node.  With parameters $(\boldsymbol \pi,\boldsymbol  \lambda, S),$ generate from an edge process a colored descendent edge from the root to a node of type $i$, the only current tip of the tree. 

If the length of this edge is $\ge T$, truncate it to length $T$, and go to Step \ref{step:4}.

Otherwise, at this node attach two descendent edges of length 0, with points on them colored by $i$. The tree now has 2 tips.

\item \label{step:2} If the tree currently has $k$ tips,  for each tip generate a descendent edge via independent edge processes with parameters $(\mathbf e_i,\boldsymbol  \lambda, S)$, where $i$ is the type of the tip and $\mathbf e_i$ the standard basis vector in $\mathbb R^m$. Truncate all edge processes at the time $\tau$ when the first reaches a ``+" state. The colored edges for each tip are conjoined to the edges (possibly of length 0) leading to the tip. 

If the path length from the root to a tip of the tree is $\ge T$, truncate all terminal edges so that all paths from root to leaves have length $T$, and go to Step \ref{step:4}.

Otherwise, at the tip that arose from reaching state $j_+$, we attach two descendent edges of length 0 with points on them colored by $j$.

\item Go to step \ref{step:2}.

\item \label{step:4} Uncolor all edges to obtain a sampled tree.

\end{enumerate}

An example simulation of a colored tree from a binary-type model is shown in Figure \ref{fig:f1}, with the color hidden in Figure \ref{fig:f4}.

\begin{rmk} Inherent in the model are several notions of time. For an individual edge process, $\tau$ is a time variable, with $\tau=0$ at the parental node in the edge. For the tree generation process overall, we use $t$ as the time variable, with $t=0$ at the root. If the edge process starting at the root enters a ``+'' state at time $\tau=\mathcal T_0$, then that root edge has length $\ell=\mathcal T_0$ and at its child node $t=\mathcal T_0$. Then if the edge process for an edge descending  from the first speciation produces an edge of length $\mathcal T_1$, then at its child node $t=\mathcal T_0+\mathcal T_1$. In general, a point on any edge $e$ at time $\tau$ has 
$$t=\tau+\sum_{\tilde e \text{ above } e} \mathcal T_{\tilde e}.$$   
We can thus view a random tree as growing with time $t$, as its terminal edges lengthen while changing type, and speciate.
\end{rmk}

\begin{rmk}
While we have defined the MPBT model as starting with a single edge descending from the root node, it is equally common to define diversification model starting at a bifurcating root. The modifications to the definition that are necessary to do so are straightforward, and working in that context would have no substantive impact on the arguments which follow.
\end{rmk}

\begin{rmk}
Even if $T\to\infty$, a single observed tree does not allow for the identification of  $\boldsymbol \pi$, so we focus on identifying the pair $(\boldsymbol \lambda, S)$. This factor of the parameter space can be identified with  the non-negative orthant of $\mathbb R^{m^2}$.\end{rmk}

\begin{figure}
\includegraphics[width=4.in]{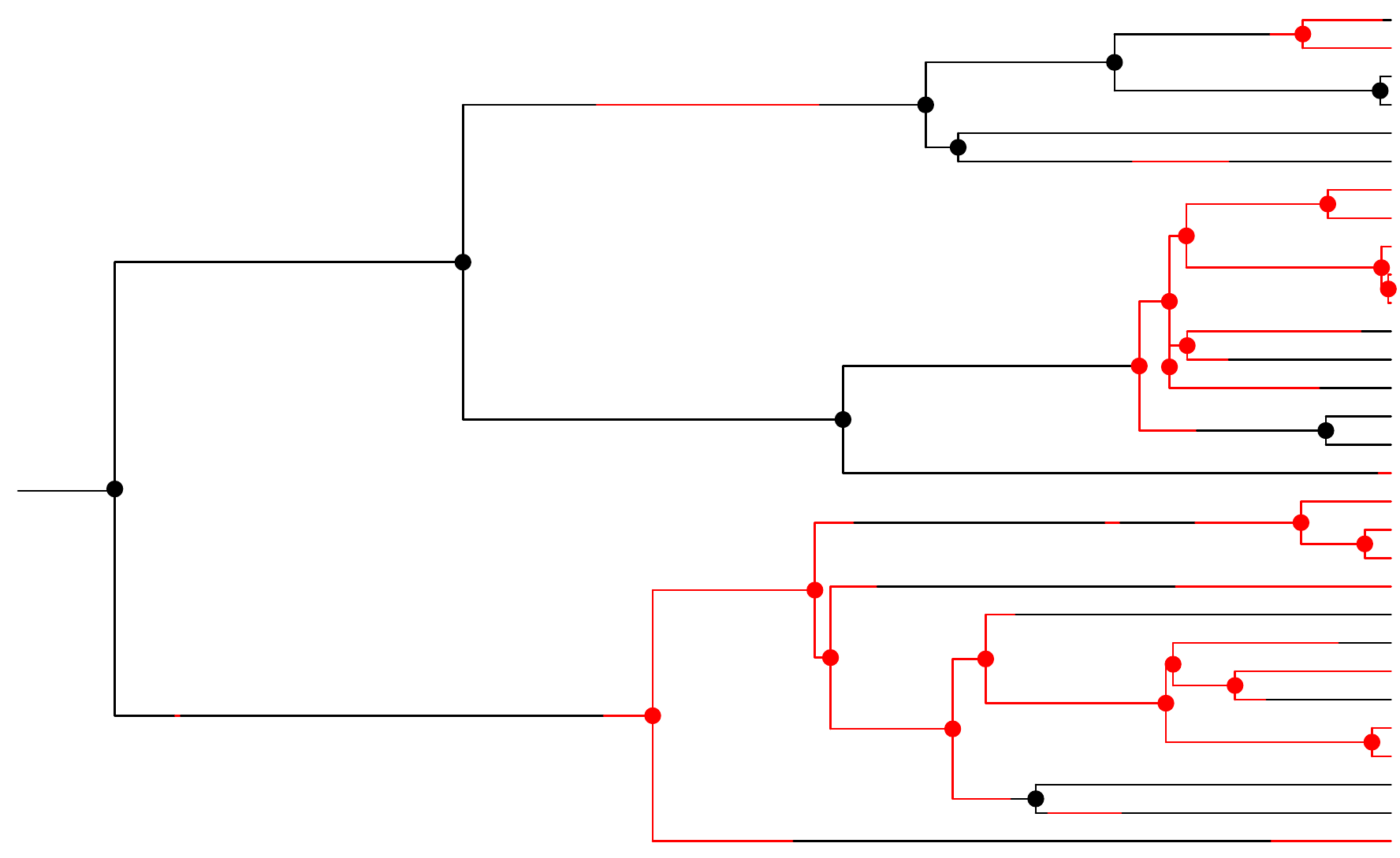}
\caption{A finite-length colored tree generated by the binary-type pure birth tree model, before colors are hidden. Here black represents  type 1 and red type 2, with $\lambda_1=0.1,$  $\lambda_2=0.5,$ $s_{12}=0.1,$ $s_{21}=0.2$. Only the uncolored tree is observed.}\label{fig:f1}
\end{figure}

\section{The edge process}\label{sec:edge}

For parameters $(\boldsymbol \lambda, S)$, let $D=\diag(\boldsymbol \lambda)$ and $U=S-D$, so that the edge process $E_\tau$ has Markov rate matrix 
$$Q =\begin{bmatrix}
U & D\\
\mathbf 0 & \mathbf 0 
\end{bmatrix}.$$

\begin{lemma}\label{lem:edgeTrans}
The transition probability matrix for $E_{\tau}$ is $$P(\tau) = \begin{bmatrix}
\exp (U\tau) & f(U\tau) D \tau\\
\mathbf{0} & I \\
\end{bmatrix},$$
where $f(A)=\sum_{n=0}^\infty  \frac 1{(n+1)!} A^n$ satisfies $f(A)A=\exp(A)-I$. 
\end{lemma}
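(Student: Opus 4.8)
The plan is to compute the matrix exponential directly from its defining Taylor series, exploiting the block upper-triangular structure of $Q$. Writing $M = Q\tau = \begin{bmatrix} U\tau & D\tau \\ \mathbf{0} & \mathbf{0}\end{bmatrix}$, I would first establish by induction on $n \geq 1$ that
$$
M^n = \begin{bmatrix} (U\tau)^n & (U\tau)^{n-1}(D\tau) \\ \mathbf{0} & \mathbf{0}\end{bmatrix}.
$$
The base case $n=1$ is immediate, and the inductive step is a single block multiplication $M^{n+1}=M^n M$: since the bottom block row of $M$ vanishes, the $(1,2)$ entry of $M^n$ contributes nothing, and one is left with $(U\tau)^n\cdot U\tau=(U\tau)^{n+1}$ in the $(1,1)$ position and $(U\tau)^n\cdot D\tau$ in the $(1,2)$ position, matching the formula with $n$ replaced by $n+1$.

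Next I would sum $\exp(M) = \sum_{n\ge 0} \tfrac{1}{n!}M^n$ block by block, handling the $n=0$ term (the identity) separately. The $(1,1)$ block sums to $I + \sum_{n\ge 1}\tfrac{1}{n!}(U\tau)^n = \exp(U\tau)$; the $(2,1)$ block is identically zero; and the $(2,2)$ block receives a contribution only from $n=0$, yielding $I$. The only block requiring care is the $(1,2)$ block,
$$
\sum_{n\ge 1}\frac{1}{n!}(U\tau)^{n-1}(D\tau).
$$
Reindexing with $k = n-1$ and pulling the fixed right factor $D\tau$ out of the series gives $\bigl(\sum_{k\ge 0}\tfrac{1}{(k+1)!}(U\tau)^k\bigr)D\tau = f(U\tau)\,D\tau$, which is the claimed form.

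Finally, the stated identity $f(A)A = \exp(A)-I$ follows by the same index shift: $f(A)A = \sum_{n\ge 0}\tfrac{1}{(n+1)!}A^{n+1} = \sum_{m\ge 1}\tfrac{1}{m!}A^m = \exp(A)-I$. I do not expect any genuine obstacle here; the computation is routine once the power formula is in hand. The only points deserving a word of justification are the interchange of the infinite sum with block multiplication by $D\tau$ — valid because multiplication by a fixed matrix is a continuous linear map on the finite-dimensional space of matrices, and the series converges absolutely — and the bookkeeping of the index shift that converts an exponential-type series into $f$. As a cross-check one could instead verify that the proposed $P(\tau)$ satisfies the Kolmogorov equation $P'(\tau) = QP(\tau)$ with $P(0)=I$ and invoke uniqueness of solutions, but the direct series computation is more transparent and makes the appearance of $f$ explicit.
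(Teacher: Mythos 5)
Your proposal is correct and follows essentially the same route as the paper's proof: compute the powers of $Q$ (block upper-triangular structure), sum the exponential series block by block, and reindex the $(1,2)$ block to recognize $f(U\tau)D\tau$. Your version simply spells out the induction, the index shift, and the continuity justification that the paper leaves implicit.
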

\begin{proof}
For $n\ge1$ $$Q^{n} = 
\begin{bmatrix}
U^{n} & U^{n - 1}D \\
\mathbf{0} & \mathbf{0} \\
\end{bmatrix},$$ so 
$$P(\tau) = I + \sum_{n = 1}^{\infty}\frac 1{n!} {\begin{bmatrix}
U^{n} & U^{n - 1}D \\
\mathbf{0} & \mathbf{0} \\
\end{bmatrix}\tau^{n}}= \begin{bmatrix}
\sum_{n = 0}^{\infty}\frac 1{n!} {U^{n}\tau^{n}} & \sum_{n = 1}^{\infty}\frac1{n!} {U^{n - 1}D\tau^{n}} \\
\mathbf{0} & I \\
\end{bmatrix}.$$ 
\end{proof}

For technical reasons we impose the following assumption, which is also biologically plausible.
 
\begin{assump} \label{a:lambdapos} The speciation rates $\lambda_i$ are positive for all $i$.
\end{assump}

\begin{lemma}\label{lem:eigs}
Let ($\boldsymbol \lambda$, $S$)  be  parameters for a MPBT edge process satisfying Assumption \ref{a:lambdapos}.
Then $U$ is non-singular and all eigenvalues of $U$ have negative real part. 
\end{lemma}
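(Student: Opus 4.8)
The plan is to read off the sign structure of $U=S-D$ directly from the definitions and then locate its spectrum with Gershgorin's circle theorem. First I would record the three facts that make $U$ well-behaved: its off-diagonal entries are $u_{ij}=s_{ij}\ge 0$ for $i\ne j$; its diagonal entries are $u_{ii}=s_{ii}-\lambda_i$; and, since the rows of $S$ sum to $0$, the $i$th row of $U$ sums to $\sum_j s_{ij}-\lambda_i=-\lambda_i$. In particular $s_{ii}=-\sum_{j\ne i}s_{ij}\le 0$, so the $i$th Gershgorin radius is
$$R_i=\sum_{j\ne i}|u_{ij}|=\sum_{j\ne i}s_{ij}=-s_{ii}.$$

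Next I would apply Gershgorin's theorem: every eigenvalue of $U$ lies in some disc centered at $u_{ii}=s_{ii}-\lambda_i$ with radius $R_i=-s_{ii}$. The crucial step is to bound the real part of any point $z$ in this disc. Since $|z-u_{ii}|\le R_i$ forces $\operatorname{Re}(z)\le \operatorname{Re}(u_{ii})+R_i$, I obtain
$$\operatorname{Re}(z)\le (s_{ii}-\lambda_i)+(-s_{ii})=-\lambda_i<0,$$
invoking Assumption \ref{a:lambdapos}. Thus the $i$th disc is contained in the half-plane $\{\operatorname{Re}(z)\le -\lambda_i\}$, and the union of all discs lies in $\{\operatorname{Re}(z)\le -\min_i\lambda_i\}$, which is strictly inside the open left half-plane. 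Every eigenvalue of $U$ therefore has negative real part. Non-singularity is then immediate: since $0$ is not an eigenvalue, $\det U=\prod_k \mu_k\ne 0$, where the $\mu_k$ denote the eigenvalues. Equivalently, the same inequality shows $|u_{ii}|=\lambda_i-s_{ii}>-s_{ii}=\sum_{j\ne i}|u_{ij}|$, so $U$ is strictly diagonally dominant and hence invertible by the Levy--Desplanques theorem.

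There is no serious obstacle here; the lemma is essentially a bookkeeping exercise. The one point requiring care is the radius computation, where one must use that the rows of $S$ sum to $0$ to rewrite $\sum_{j\ne i}s_{ij}=-s_{ii}$, so that the center-plus-radius of each disc collapses exactly to $-\lambda_i$. It is precisely Assumption \ref{a:lambdapos}, the positivity of every $\lambda_i$, that pushes each disc strictly off the imaginary axis; without it one would only conclude $\operatorname{Re}(z)\le 0$, which would leave open the possibility of a zero eigenvalue and so would not establish non-singularity.
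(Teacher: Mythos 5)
Your proof is correct and uses the same tools as the paper's: strict diagonal dominance of $U$ and the Gershgorin circle theorem, with your explicit center-plus-radius computation $\operatorname{Re}(z)\le (s_{ii}-\lambda_i)+(-s_{ii})=-\lambda_i<0$ being exactly the calculation underlying the paper's shorter argument. The only (immaterial) difference is in ordering: the paper first invokes diagonal dominance for non-singularity and then Gershgorin for the spectral location, whereas you derive the spectral bound first and obtain non-singularity as a corollary.
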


\begin{proof} 
The assumption implies that $U$ is strictly diagonally dominant, that is, the absolute value of each diagonal entry is strictly greater than the sum of the absolute values of all other entries in its row. Thus $U$ is non-singular \cite{Horn2012}.
Since the diagonal entries are also negative, by the Gershgorin Circle Theorem every eigenvalue of $U$ will have negative real part.    
\end{proof}

\begin{prop}
\label{prop:condT}
Let $F_{i}$ denote the cdf of the speciation time $\mathcal T$ conditioned on $E_{0} = i_{-}$, and $\mathbf 1$ be the vector of $1$s. Then $F_i$ is given by the $i$-th entry of $$\mathbf{1} - \exp({U\tau})\mathbf{1}.$$ Moreover, under Assumption \ref{a:lambdapos}, $\mathcal T$ is finite with probability $1$.
\end{prop}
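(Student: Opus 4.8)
The plan is to read $F_i$ directly off the transition matrix computed in Lemma~\ref{lem:edgeTrans}, and then to deduce almost-sure finiteness from the spectral bound of Lemma~\ref{lem:eigs}. The first point is that, because the states $1_+,\dots,m_+$ are absorbing (the bottom block of $Q$ vanishes), the event $\{\mathcal T\le\tau\}$ coincides with $\{E_\tau\in\{1_+,\dots,m_+\}\}$: once the process enters a ``$+$'' state it stays there, so it has speciated by time $\tau$ precisely when it occupies a ``$+$'' state at time $\tau$. Conditioning on $E_0=i_-$ then gives
$$F_i(\tau)=\Pr\bigl(E_\tau\in\{1_+,\dots,m_+\}\mid E_0=i_-\bigr)=\sum_{j=1}^m P_{i_-,\,j_+}(\tau),$$
so $F_i(\tau)$ is the $i$-th entry of the row sum of the upper-right block of $P(\tau)$, which by Lemma~\ref{lem:edgeTrans} is $f(U\tau)D\tau$. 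Hence $F_i(\tau)=\bigl[f(U\tau)D\tau\,\mathbf 1\bigr]_i$.

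It remains to identify $f(U\tau)D\tau\,\mathbf 1$ with $\mathbf 1-\exp(U\tau)\mathbf 1$. The quickest route is that $P(\tau)$ is stochastic, so its $i_-$ row sums to $1$; splitting that sum into its ``$-$'' and ``$+$'' contributions gives $\exp(U\tau)\mathbf 1+f(U\tau)D\tau\,\mathbf 1=\mathbf 1$, which rearranges to the claim. Alternatively, since $S$ has zero row sums and $D\mathbf 1=\boldsymbol\lambda$, one has $U\mathbf 1=(S-D)\mathbf 1=-D\mathbf 1$; substituting into the identity $f(U\tau)(U\tau)=\exp(U\tau)-I$ of Lemma~\ref{lem:edgeTrans} yields $-f(U\tau)D\tau\,\mathbf 1=(\exp(U\tau)-I)\mathbf 1$, which is the same statement. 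Either way $F_i$ equals the $i$-th entry of $\mathbf 1-\exp(U\tau)\mathbf 1$.

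For the finiteness claim I would let $\tau\to\infty$, writing $\Pr(\mathcal T<\infty\mid E_0=i_-)=\lim_{\tau\to\infty}F_i(\tau)=1-\lim_{\tau\to\infty}[\exp(U\tau)\mathbf 1]_i$, so that it suffices to show $\exp(U\tau)\to\mathbf 0$. Under Assumption~\ref{a:lambdapos}, Lemma~\ref{lem:eigs} gives that every eigenvalue of $U$ has negative real part; the standard entrywise description of a matrix exponential (each entry is a finite sum of terms $\tau^k e^{\mu\tau}$ with $\mu$ an eigenvalue of $U$) then forces $\exp(U\tau)\to\mathbf 0$. Thus $F_i(\tau)\to 1$ for each $i$, and since the initial distribution $\widetilde{\boldsymbol\pi}$ is a convex combination over the states $i_-$, $\mathcal T$ is finite with probability $1$. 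I expect no serious obstacle: the one step needing care is the identification $\{\mathcal T\le\tau\}=\{E_\tau\in\{1_+,\dots,m_+\}\}$, which hinges on the absorbing property of the ``$+$'' states; everything else is the algebra above and a direct appeal to Lemma~\ref{lem:eigs}.
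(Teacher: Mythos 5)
Your proof is correct and takes essentially the same approach as the paper: both read $F_i$ off the upper-right block of $P(\tau)$ from Lemma~\ref{lem:edgeTrans}, use $D\mathbf 1=-U\mathbf 1$ together with $f(A)A=\exp(A)-I$ to obtain $\mathbf 1-\exp(U\tau)\mathbf 1$, and then let $\tau\to\infty$ via Lemma~\ref{lem:eigs} to conclude almost-sure finiteness. Your explicit justification of $\{\mathcal T\le\tau\}=\{E_\tau\in\{1_+,\dots,m_+\}\}$ and the alternative row-sum (stochasticity) argument are harmless elaborations of steps the paper leaves implicit.
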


\begin{proof} Since $\mathcal T$ is the time $E_{\tau}$ first enters any of the absorbing states $j_+$, $F_{i}$ is the sum across the $i_{-}$ row of the upper right $m\times m$ block of $P(\tau)$.
From Lemma \ref{lem:edgeTrans}, using that $D\mathbf{1} = -U\mathbf{1}$,
the column vector of the $F_{i}$s is therefore given by $$f(U\tau)  D\tau \mathbf{1} = -f(U\tau)U\tau\mathbf{1} = \mathbf{1} - \exp({U\tau})\mathbf{1}.$$ 
   
Under Assumption \ref{a:lambdapos}, by Lemma \ref{lem:eigs} the eigenvalues of $U$ have negative real parts, so $\lim_{\tau \to \infty}\exp({U\tau}) = \mathbf{0}$. Thus $\lim_{\tau \to \infty}F_{i}(\tau) = 1$ for each $i$, implying that $\mathcal T$ is finite with probability $1$.
\end{proof}

\begin{prop} 
\label{prop:asycondprob}
Let $P_{i_{-}, j_{+}} = \lim_{\tau \to \infty}P_{i_{-}, j_{+}}(\tau)$ denote the asymptotic probability of transition to $j_{+}$ conditioned on $E_{0} = i_{-}$. Then under Assumption \ref{a:lambdapos}, $P_{i_{-}, j_{+}}$ is the $(i, j)$-entry of $-U^{-1}D$.
\end{prop}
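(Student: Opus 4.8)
The plan is to read off $P_{i_-,j_+}(\tau)$ as an entry of the upper-right block of the transition matrix computed in Lemma \ref{lem:edgeTrans}, rewrite that block in closed form using the identity $f(A)A=\exp(A)-I$, and then pass to the limit $\tau\to\infty$ using the spectral information from Lemma \ref{lem:eigs}.

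First I would observe that $P_{i_-,j_+}(\tau)$ is exactly the $(i,j)$-entry of the $m\times m$ upper-right block $f(U\tau)\,D\tau$ of $P(\tau)$ given by Lemma \ref{lem:edgeTrans}. The key step is to eliminate the power series $f$. Since $U$ is non-singular by Lemma \ref{lem:eigs} and $\tau>0$, the matrix $U\tau$ is invertible, so the identity $f(U\tau)(U\tau)=\exp(U\tau)-I$ yields $f(U\tau)=(\exp(U\tau)-I)(U\tau)^{-1}$. Substituting this, and using that the scalar $\tau$ commutes with every matrix while $(U\tau)^{-1}=\tau^{-1}U^{-1}$, the block simplifies to $f(U\tau)\,D\tau=(\exp(U\tau)-I)(U\tau)^{-1}D\tau=(\exp(U\tau)-I)U^{-1}D$.

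It then remains to take the limit. By Lemma \ref{lem:eigs} every eigenvalue of $U$ has negative real part, so $\lim_{\tau\to\infty}\exp(U\tau)=\mathbf 0$; hence the block converges to $(\mathbf 0-I)U^{-1}D=-U^{-1}D$. Reading off the $(i,j)$-entry yields $P_{i_-,j_+}=(-U^{-1}D)_{ij}$, as claimed.

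I do not anticipate a serious obstacle here; the one point requiring care is the algebraic cancellation of the scalar $\tau$ against the matrix factor $(U\tau)^{-1}$, which is valid precisely because $U$ is invertible (Lemma \ref{lem:eigs}) and because $f(U\tau)$, being a power series in $U\tau$, commutes with $U$. Everything else follows from results already in hand: the block structure of $P(\tau)$ from Lemma \ref{lem:edgeTrans} and the spectral bound from Lemma \ref{lem:eigs}.
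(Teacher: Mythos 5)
Your proof is correct and follows essentially the same route as the paper's: read off the upper-right block $f(U\tau)D\tau$ from Lemma \ref{lem:edgeTrans}, use the identity $f(A)A=\exp(A)-I$ together with the non-singularity of $U$ (Lemma \ref{lem:eigs}) to rewrite it as $(\exp(U\tau)-I)U^{-1}D$, and then let $\tau\to\infty$ using the negative real parts of the eigenvalues of $U$. No gaps; your extra care about cancelling the scalar $\tau$ against $(U\tau)^{-1}$ is exactly the step the paper performs implicitly.
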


\begin{proof} The matrix $P_{-,+}(\tau)$ with entries $P_{i_{-}, j_{+}}(\tau)$ is the upper right $m \times m$ block of $P(\tau)$, so by Lemma \ref{lem:edgeTrans}, $$ P_{{-}, {+}}(\tau)=f(U\tau)D\tau=(\exp({U\tau}) - I)U^{-1}D,$$
using that $U$ is non-singular by Lemma \ref{lem:eigs}.
But $\lim_{\tau \to \infty}\exp({U\tau}) = \mathbf{0}$ because  $U$'s eigenvalues have negative real parts. Thus 
$$P_{-, +}=\lim_{\tau \to \infty}(\exp({U\tau}) - I)U^{-1}D = (\mathbf{0} - I)U^{-1}D = -U^{-1}D.$$
\end{proof}


\section{Type Counting Process}\label{sec:typecount}

Another ingredient of our approach to establishing the  identifiability of MPBT model parameters is an analysis of an associated classical branching process, in which only the type counts are observed. More specifically,  it records the number of edges of the tree which have each type as a function of time, but retains no information on the topology of the tree. 
We call this the \emph{type counting process}, and in this section use established results to determine the asymptotic behavior of the relative frequencies of each type.

\begin{definition}
\label{def3}
For $i\in[m]$, let ${N}^{i}_{t}$ denote the number of edges in a colored random tree arising from the colored MPBT model that exist at time $t$ and are of type $i$ at that moment. The $\emph{type counting process}$ $N_{t}$ is the $(\Z^{\ge 0})^m$-valued continuous-time stochastic process over $[0, \infty)$ defined by $N_{t} := (N^{1}_{t}, N^{2}_{t},\dots, N_t^m)$.  The $\emph{relative frequency process}$ is $R_{t} = N_t/(\sum_{i=1}^m N_t^i)$, provided
the denominator is non-zero.
\end{definition}

The asymptotics of the relative frequencies follow from results of \cite{Athreya1968} on \emph{multitype} \emph{continuous-time} \emph{Markov} \emph{branching} \emph{processes}, specifically Theorems 1 and 2 of that work, which are paraphrased below as Theorem \ref{thm:A}. Such a model can be described as a process where individuals of type $i$ live an exponentially-distributed length of time (whose rate only depends on type) and on death may be replaced by individuals of any type according to a distribution over $(\Z^{\ge 0})^{m}$. 

To place the type counting process of the MPBT model into this framework, both speciation and change in type are viewed as deaths.  Speciation results in replacement by 2 individuals of the same type, and change in type results in replacement by an individual of a different type. Since a speciation ``death" of a type $i$ individual occurs with rate $\lambda_i$, and a type change ``death" of a type $i$ individual followed by replacement with type $j$ occurs with rate $s_{ij}$, the combined rate of death for type $i$ is $\lambda_i +\sum_{j\ne i} s_{ij}$. When a death occurs, it is a speciation with probability
$$\frac{\lambda_i}{\lambda_i +\sum_{j\ne i} s_{ij}},$$ and a change to type $j$ with probability $$\frac{s_{ij}}{\lambda_i +\sum_{j\ne i} s_{ij}}$$

Basic properties of the type counting process are summarized in the following.
 
 \begin{lemma}
\label{lem2}
The type counting process $N_{t}$ of the MPBT model is a strong Markov, continuous-time, $m$-type branching process, where each type $i$ death has an offspring distribution defined by the multivariable probability generating function  $$h_{i}(x_{1}, x_{2},\dots, x_m) = \frac{\lambda_i}{\lambda_i +\sum_{j\ne i} s_{ij}}x_{i}^{2} + \sum_{j\ne i} \frac{s_{ij}}{\lambda_i +\sum_{j\ne i} s_{ij}}x_{j}.$$ 
\end{lemma}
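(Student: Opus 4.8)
The plan is to verify directly that $N_t$ satisfies the defining properties of the multitype continuous-time Markov branching processes of \cite{Athreya1968}, as recalled in the paragraph preceding the statement: individuals of type $i$ should have independent, exponentially distributed lifetimes whose rate depends only on $i$, and at the end of its life each individual should be replaced, independently of the rest of the population, by a random collection of offspring whose distribution has the asserted generating function $h_i$. The one conceptual point I would settle first is that a type switch along an edge --- which in the tree is a change occurring along a single continuous lineage, not a branching event --- may legitimately be modeled as the ``death'' of a type $i$ individual and its ``replacement'' by a single type $j$ individual. This is permissible precisely because $N_t$ records only the vector of type counts and discards all topological information, so it cannot distinguish ``the same edge changed from type $i$ to type $j$'' from ``a type $i$ individual was replaced by a type $j$ individual.'' Under this identification a switch leaves the total edge count unchanged (one offspring), while a speciation increments it by one (two offspring), matching the total degrees $1$ and $2$ of the two kinds of terms appearing in $h_i$.

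For the lifetime distribution, I would observe that, by Definition \ref{def1}, the leading point of any single lineage is governed by the edge-process CTMC, and while it carries type $i$ it occupies state $i_-$. The total rate at which the process leaves $i_-$ is $\lambda_i + \sum_{j\ne i} s_{ij}$, being the sum of the rate $\lambda_i$ of the transition $i_-\to i_+$ (speciation) and the rates $s_{ij}$ of the transitions $i_-\to j_-$ (switching to type $j$). Hence the holding time of a type $i$ individual is exponential with rate $\lambda_i + \sum_{j\ne i} s_{ij}$, a quantity depending only on its type, as required. Independence of the lifetimes and evolutions of distinct individuals is inherited from the model's defining assumption that edges are produced by independent edge processes conditioned on their starting types.

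It then remains to identify the offspring distribution and the Markov structure. Conditioned on a transition out of $i_-$ occurring, a standard competing-exponentials computation shows it is the speciation transition $i_-\to i_+$ with probability $\lambda_i/(\lambda_i + \sum_{j\ne i} s_{ij})$ and the switching transition $i_-\to j_-$ with probability $s_{ij}/(\lambda_i + \sum_{j\ne i} s_{ij})$; weighting the corresponding offspring monomials $x_i^2$ and $x_j$ by these probabilities yields exactly $h_i$. Because the edge process is a continuous-time Markov chain, its strong Markov property ensures that upon entering $j_-$ the subsequent evolution is a fresh, independent copy started from $j_-$, which is precisely the renewal-at-death structure required, and the strong Markov property of $N_t$ itself follows from the standard jump-chain construction of such a process from independent exponential clocks on the finitely many current lineages. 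I expect the only genuine obstacle to be the reinterpretation of a within-edge type change as a death-and-birth event; everything else is bookkeeping, so I would take care to justify that step carefully via the loss of topological information noted above.
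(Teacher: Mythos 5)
Your proposal is correct and follows essentially the same route as the paper, which offers no formal proof at all but justifies the lemma in the paragraph immediately preceding it: both speciation and type change are reinterpreted as ``deaths'' (with replacement by two individuals of the same type, or one of a different type), the combined death rate for type $i$ is $\lambda_i + \sum_{j\ne i} s_{ij}$, and the competing-exponentials weights $\lambda_i/(\lambda_i + \sum_{j\ne i} s_{ij})$ and $s_{ij}/(\lambda_i + \sum_{j\ne i} s_{ij})$ give exactly the coefficients of $x_i^2$ and $x_j$ in $h_i$. Your additional care in justifying the death-and-replacement reinterpretation via the loss of topological information, and in noting where the strong Markov property comes from, only makes explicit what the paper leaves implicit.
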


We introduce yet another matrix defined in terms of the MPBT model parameters, as its leading eigenvalue and corresponding eigenvector plays a large role in the counting process's behavior.

\begin{definition}
\label{def4}
Given parameters $(\boldsymbol \lambda , S)$ of the MPBT model, let 
$$A =  S+D.$$  A \emph{leading eigenvalue of $A$} is an eigenvalue, $\omega$, with the largest real part, and a \emph{normalized leading left eigenvector} of $A$, is a left eigenvector for $\omega$ with $\sum_{i}u_{i} = 1$.
\end{definition}
The matrix $A$ is the infinitesimal generator of the conditional expectation of the $N_i$s. More precisely,
$$\exp(At)=M_t=(m_{ij}(t))$$ with
$$m_{ij}(t)= \mathbb E\left [N^j_t|N_0=\mathbf e_i\right ],$$
where $\mathbf e_{i}$ is the $i$-th standard basis vector.

We will shortly show $\omega$ and $\mathbf u$ are uniquely determined, under an additional assumption.

\begin{assump}\label{a:spos}
The off-diagonal entries of $S$ are positive, i.e., $s_{ij}>0$ for $i\ne j$ .
\end{assump}

\begin{lemma} \label{lem3}
For parameters $(\boldsymbol \lambda , S)$ of the MPBT model satisfying Assumption \ref{a:spos},
\begin{enumerate} 
\item\label{c:1} $M_t=\exp({At})$ has positive entries for $t > 0$.
\item \label{c:2} A has a unique leading eigenvalue $\omega$, which is both simple and real. Moreover the corresponding normalized left eigenvector $\mathbf u$ can be chosen to have all positive components.
\end{enumerate}
\end{lemma}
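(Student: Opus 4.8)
The plan is to recognize $A = S + D$ as a Metzler matrix (nonnegative off-diagonal entries) with \emph{strictly} positive off-diagonal entries, and to reduce both claims to classical Perron--Frobenius theory via a diagonal shift. Since $D$ is diagonal, for $i \ne j$ we have $(A)_{ij} = s_{ij} > 0$ by Assumption \ref{a:spos}, so all off-diagonal entries of $A$ are positive. Choosing a constant $c > \max_i |A_{ii}|$ and setting $B = A + cI$, every diagonal entry $A_{ii} + c$ and every off-diagonal entry $s_{ij}$ of $B$ is positive, so $B > 0$ entrywise. The structural fact I would exploit throughout is that $A$ and $B$ share eigenvectors, with eigenvalues related by the shift: $A\mathbf v = \mu \mathbf v$ if and only if $B\mathbf v = (\mu + c)\mathbf v$, and similarly for left eigenvectors.

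For part \ref{c:1}, I would write $M_t = \exp(At) = e^{-ct}\exp(Bt)$. Because $B$ has nonnegative entries, so does each power $B^n$, and hence $\exp(Bt) = \sum_{n\ge 0}\frac{t^n}{n!}B^n$ has nonnegative entries for $t \ge 0$. For $t > 0$ the first two terms already give $\exp(Bt) \ge I + tB$ entrywise, and $I + tB > 0$ since $B > 0$. Multiplying by the positive scalar $e^{-ct}$ then shows $M_t > 0$ entrywise for $t > 0$.

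For part \ref{c:2}, I would apply the Perron--Frobenius theorem to the positive matrix $B$. This yields a Perron root $\rho = \rho(B)$ that is real, simple, equal to the spectral radius, and strictly dominant in modulus, meaning $|\nu| < \rho$ for every other eigenvalue $\nu$ of $B$; moreover its left and right eigenvectors may be taken to have all positive entries. Setting $\omega = \rho - c$, the shift relation makes $\omega$ a simple real eigenvalue of $A$ whose associated left eigenvector is positive, and I normalize it so that $\sum_i u_i = 1$ to obtain $\mathbf u$.

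The one point requiring care — and the step I would flag as the main (if modest) obstacle — is matching the Perron--Frobenius conclusion, which is phrased in terms of \emph{modulus}, to the definition of a leading eigenvalue of $A$, which is phrased in terms of \emph{real part}. This is resolved by observing that for any eigenvalue $\nu \ne \rho$ of $B$ one has $\operatorname{Re}(\nu) \le |\nu| < \rho$, so the corresponding eigenvalue $\nu - c$ of $A$ satisfies $\operatorname{Re}(\nu - c) < \rho - c = \omega$. Hence $\omega$ is the \emph{unique} eigenvalue of $A$ of largest real part, establishing that the leading eigenvalue is unique, simple, and real, with the asserted positive normalized left eigenvector $\mathbf u$.
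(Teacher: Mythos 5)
Your proof is correct and follows essentially the same route as the paper's: shift $A$ by a multiple of the identity to obtain an entrywise-positive matrix $B$, deduce positivity of $\exp(At)$ from positivity of $B$, and apply the Perron--Frobenius theorem to $B$, transferring the conclusion back through the spectral shift. The only differences are cosmetic (the paper shifts $At$ by $kI$ rather than shifting $A$ itself), and your explicit reconciliation of modulus-dominance with the real-part definition of a leading eigenvalue makes rigorous a step the paper leaves implicit in ``the second claim follows.''
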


\begin{proof} 
Fix $t > 0$. Then, using Assumption \ref{a:spos}, $A$ has positive off-diagonal entries, so there is a real $k$ such that $B = At + kI$ has positive entries. Since $B, kI$ commute, it follows that $e^{At} = e^{B - kI} = e^{-k}e^{B}$. Since $B$ has positive entries, $e^{B}$ does as well. Thus, $e^{At}$ has positive entries.

The Perron-Frobenius Theorem applied to $B$ shows it has a unique dominant (i.e., of maximal absolute value) eigenvalue $\omega$ which is also positive and simple, with a unique normalized left eigenvector $\mathbf u$ whose components are all positive. Since $A$ has the same eigenvectors, and eigenvalues shifted by $-k$ and scaled by $1/t$, the second claim follows.
\end{proof}

Key properties of the counting process follow from the following more general theorem on classical branching processes.

\begin{theorem}\cite{Athreya1968}
\label{thm:A}
Let $X_{t}$ be a strong Markov, continuous-time, $m$-type branching process over $[0, \infty)$ which takes values in $(\Z^{\ge 0})^{k}$. Let $M_t=\exp(At) $ be the conditional expectation matrix. Let $h_{i}(x_{1}, ..., x_{k})$ be the  offspring probability generating function for type $i$.    
  
If $M_{t_0}$ has positive entries for some $t_{0} > 0$, and $h_i(s)$ is of degree $>1$ for all $i$, then as $t \to \infty$,
$$X_{t}e^{-\omega t} \xrightarrow{a.s.} W\mathbf u,$$
where $W$ is a non-negative random variable, $\omega$ is the leading eigenvalue of $A$, and $\mathbf u$ is the positive normalized left eigenvector of $A$ associated with $\omega$.

Moreover,  if $\boldsymbol \xi^i =(\xi^i_j)$ are random variables with generating functions $h_i$, then
\begin{equation}\label{eq:logcond}
\mathbb{E}\left[\xi^{i}_j\log(\xi^{i}_j)\right] < \infty
\end{equation}
for all  $i,j$
if and only if for all $i$
 $$\mathbb{P}(W = 0 \ \vert \ X_{0} = \mathbf e_{i}) = \mathbb{P}(X_{t} = 0 \textrm{ for some t} \ \vert \ X_{0} = \mathbf e_{i}).$$
 \end{theorem}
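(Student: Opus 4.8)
The plan is to recognize this as the continuous-time, multitype analogue of the Kesten--Stigum theorem and to prove it by combining the Perron--Frobenius structure already established in Lemma \ref{lem3} with a martingale convergence argument, followed by the sharp $x\log x$ dichotomy. Since the statement is exactly Theorems 1 and 2 of \cite{Athreya1968}, in the paper itself one would simply invoke that work; but the underlying argument runs as follows.

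First I would fix the spectral data. By Lemma \ref{lem3}, under Assumption \ref{a:spos} the matrix $M_{t_0}=\exp(At_0)$ has positive entries, so $A$ has a simple real leading eigenvalue $\omega$, a positive normalized left eigenvector $\mathbf u$, and (by the same Perron--Frobenius argument on the right) a positive right eigenvector $\mathbf v$, which I normalize so that $\mathbf u\,\mathbf v=1$. The branching property gives the conditional expectation identity $\mathbb E[X_t\mid\mathcal F_s]=X_s M_{t-s}$ for the natural filtration $\mathcal F_s$, and since $M_{t-s}\mathbf v=e^{\omega(t-s)}\mathbf v$, the scalar process $W_t:=e^{-\omega t}\,X_t\,\mathbf v$ is a nonnegative martingale. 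By the martingale convergence theorem it converges almost surely to a finite nonnegative limit $W$, with no moment hypotheses needed at this stage.

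Next I would upgrade this scalar convergence to the full vector statement $e^{-\omega t}X_t\to W\mathbf u$. Because $\omega$ is simple and strictly dominant, the spectral decomposition gives $e^{-\omega t}M_t\to\Pi=\mathbf v\,\mathbf u$, the rank-one projection onto the leading eigenspace, with all subdominant modes decaying at a strictly faster exponential rate. This says the \emph{expected} normalized vector aligns with $\mathbf u$; promoting it to almost-sure alignment of $X_t$ itself uses the martingale structure together with control of the subdominant eigendirections, so that the surviving direction is $\mathbf u$. Since $(W\mathbf u)\,\mathbf v=W$ under the normalization $\mathbf u\,\mathbf v=1$, the limit is exactly $W\mathbf u$, consistent with $W=\lim_t (e^{-\omega t}X_t)\,\mathbf v$.

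The main obstacle is the ``moreover'' part, the $x\log x$ dichotomy, which is the genuinely delicate Kesten--Stigum content. For the forward direction I would show that $\mathbb E[\xi^i_j\log\xi^i_j]<\infty$ for all $i,j$ forces the martingale $W_t$ to be uniformly integrable, so that $\mathbb E[W\mid X_0=\mathbf e_i]=\mathbf e_i\,\mathbf v=v_i>0$. A zero--one argument, using the branching structure to show that $\{W=0\}$ is trivial conditionally on survival, then forces $W>0$ almost surely on the survival event, while $W=0$ holds trivially on extinction; together these identify $\{W=0\}$ with the extinction event up to a null set. For the converse, when the logarithmic moment condition fails one shows via a truncation/size-biasing (spine) argument that $W=0$ almost surely even on survival. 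Establishing uniform integrability under the sharp logarithmic moment condition, rather than under a cruder $L^2$ hypothesis, is the crux, and is precisely where the classical change-of-measure technique of \cite{Athreya1968} enters.
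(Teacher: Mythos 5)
The paper gives no proof of this theorem: it is imported, as a paraphrase of Theorems 1 and 2, from \cite{Athreya1968}, so your opening remark that one would simply invoke that work is precisely the paper's approach, and your proposal is therefore consistent with it. Your supplementary sketch (right-eigenvector martingale, spectral-gap upgrade to convergence in the direction $\mathbf u$, and the $x\log x$ dichotomy via uniform integrability versus degeneracy of $W$) is a sound outline of the classical argument; the one quibble is attribution, since the size-biasing/spine change-of-measure technique you credit to \cite{Athreya1968} actually postdates it, the original proof of the dichotomy there being analytic, via the integral equation for the Laplace transform of $W$.
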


\begin{cor} 
\label{cor4} Consider the counting process associated to the MPBT model for parameters $(\boldsymbol \pi, \boldsymbol\lambda,S)$. Then under Assumptions \ref{a:lambdapos} and \ref{a:spos},
$\sum N^i_t$ is non-zero and
as $t \to \infty$, $$R_{t} \xrightarrow{a.s.} \mathbf u,$$ where $u$ is the positive  normalized leading left eigenvector of $A$.
\end{cor}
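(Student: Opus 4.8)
The plan is to apply Theorem \ref{thm:A} to the type counting process $N_t$ and then pass from the normalized vector $N_t e^{-\omega t}$ to the relative frequencies $R_t$. First I would verify the hypotheses of Theorem \ref{thm:A}. Lemma \ref{lem2} already identifies $N_t$ as a strong Markov, continuous-time, $m$-type branching process with offspring generating functions $h_i$; Lemma \ref{lem3}(\ref{c:1}) supplies the required $t_0$, since $M_t=\exp(At)$ has positive entries for every $t>0$; and under Assumption \ref{a:lambdapos} the coefficient $\lambda_i/(\lambda_i+\sum_{j\ne i}s_{ij})$ of $x_i^2$ in $h_i$ is strictly positive, so each $h_i$ has degree exactly $2>1$. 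Theorem \ref{thm:A} then yields $N_t e^{-\omega t}\xrightarrow{a.s.} W\mathbf u$, with $\mathbf u$ the positive normalized leading left eigenvector of $A$ furnished by Lemma \ref{lem3}(\ref{c:2}).

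The crux is to show the limiting random variable $W$ is almost surely strictly positive, as this is exactly what lets us divide by it. I would first observe that the log-moment condition \eqref{eq:logcond} holds trivially: the offspring random variables $\xi^i_j$ take values in $\{0,1,2\}$, so $\xi^i_j\log\xi^i_j$ is bounded and has finite expectation. The equivalence in Theorem \ref{thm:A} then forces $\mathbb P(W=0\mid N_0=\mathbf e_i)$ to equal the extinction probability $\mathbb P(N_t=0\textrm{ for some }t\mid N_0=\mathbf e_i)$ for each $i$. But the MPBT model is pure-birth: a speciation replaces one type-$i$ edge by two of type $i$, and a type change replaces it by a single edge of another type, so the total count $\sum_i N_t^i$ is non-decreasing and, begun from a single individual, never drops below $1$. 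Hence the extinction probability is $0$, giving $W>0$ almost surely; this simultaneously establishes that $\sum_i N_t^i$ is non-zero for all $t$, so that $R_t$ is well-defined.

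Finally I would write
$$R_t=\frac{N_t e^{-\omega t}}{\sum_{i=1}^m N_t^i e^{-\omega t}}.$$
The numerator converges a.s.\ to $W\mathbf u$ and the scalar denominator converges a.s.\ to $W\sum_i u_i=W$, using the normalization $\sum_i u_i=1$. Since $W>0$ almost surely, the ratio converges a.s.\ to $\mathbf u$, as claimed. Because Corollary \ref{cor4} concerns the process begun from the random root type drawn from $\boldsymbol\pi$, I would note that the argument applies conditionally on each starting state $\mathbf e_i$ and that the limit $\mathbf u$ does not depend on $i$, so the unconditional almost-sure convergence follows at once.

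The main obstacle is the strict positivity of $W$. Verifying the hypotheses of Theorem \ref{thm:A} and taking the limit of the ratio are routine, but the conclusion $R_t\to\mathbf u$ genuinely fails on the event $\{W=0\}$, and the only available leverage to rule this out is the extinction-probability equivalence of Theorem \ref{thm:A} combined with the no-extinction structure intrinsic to a pure-birth model.
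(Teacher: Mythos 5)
Your proposal is correct and follows essentially the same route as the paper's proof: verify the hypotheses of Theorem \ref{thm:A} via Lemmas \ref{lem2} and \ref{lem3}, rule out $W=0$ by noting the pure-birth process has non-decreasing total count and hence zero extinction probability, and then pass to the limit of the ratio $N_t e^{-\omega t}/\sum_i N_t^i e^{-\omega t}$. Your write-up is in fact slightly more explicit than the paper's (spelling out that the bounded offspring distributions make condition \eqref{eq:logcond} trivial and that Assumption \ref{a:lambdapos} gives each $h_i$ degree $2>1$), but the argument is the same.
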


\begin{proof} Using the assumptions and Lemmas \ref{lem2} and \ref{lem3}, the hypotheses of Theorem \ref{thm:A} are met, including inequality \eqref{eq:logcond}.
Thus
$$N_{t}e^{-\omega t} \xrightarrow{a.s.} W\mathbf u,$$ where $\omega$ is the leading eigenvalue of $A$, $\mathbf u$ is its positive normalized left eigenvector, and $W$ is a non-negative random variable.    
    
Since the random variable $\sum N^i_t$ is non-decreasing,  the probability of extinction is zero: $$\mathbb{P}(N_{t} = 0 \textrm{ for some $t$} \ \vert \ N_{0} = \mathbf e_{i})=0.$$ Thus we find $\mathbb{P}(W = 0 \ \vert \ X_{0} = \mathbf e_{i})=0$, implying $\mathbb{P}(W = 0 )=0$ regardless of $\boldsymbol \pi$.
Then by the continuous mapping theorem, $$R_{t}^{i} = \frac{N_{t}^{i}}{\sum_{i}N_{t}^{i}} = \frac{N_{t}^{i}e^{-\omega t}}{\sum_{i}N_{t}^{i}e^{-\omega t}} \xrightarrow{a.s.} \frac{Wu_{i}}{W} = u_{i}$$ for each $i$.
\end{proof}
  
\begin{rmk}
In studying diversification models with a single type but time-dependent rates of speciation and extinction, it is common to consider the random function giving the the number of lineages through time in a tree. This loses no information on parameters from the full tree, as each change in its value (speciation or extinction) is equally likely to have occurred on any lineage, and the growth of this function is thus highly informative on
parameter values.  For the multitype pure-birth model, however, the function $\sum_i N^i_t$ should not capture all information in the tree, as speciation may not be equally likely on all lineages. Corollary \ref{cor4} indicates its growth is determined only by $\omega$, the largest eigenvalue of $A$.
\end{rmk}


\section{Identifiability of the MPBT model}\label{sec:main}

Using the distributions of edge lengths and relative frequencies of each type of edge in a tree at a given time found in Sections \ref{sec:edge} and \ref{sec:typecount}, we are ready to establish identifiability of the MPBT parameters. To do so, we consider an asymptotic joint distribution of the lengths of 3 edges around a common node in the tree (see Figure \ref{fig:f4}). We seek to show that from this distribution the model parameters $(\boldsymbol\lambda, S)$ can be determined, up to label swapping.

\begin{figure}
\begin{center}
\includegraphics[width=4in]{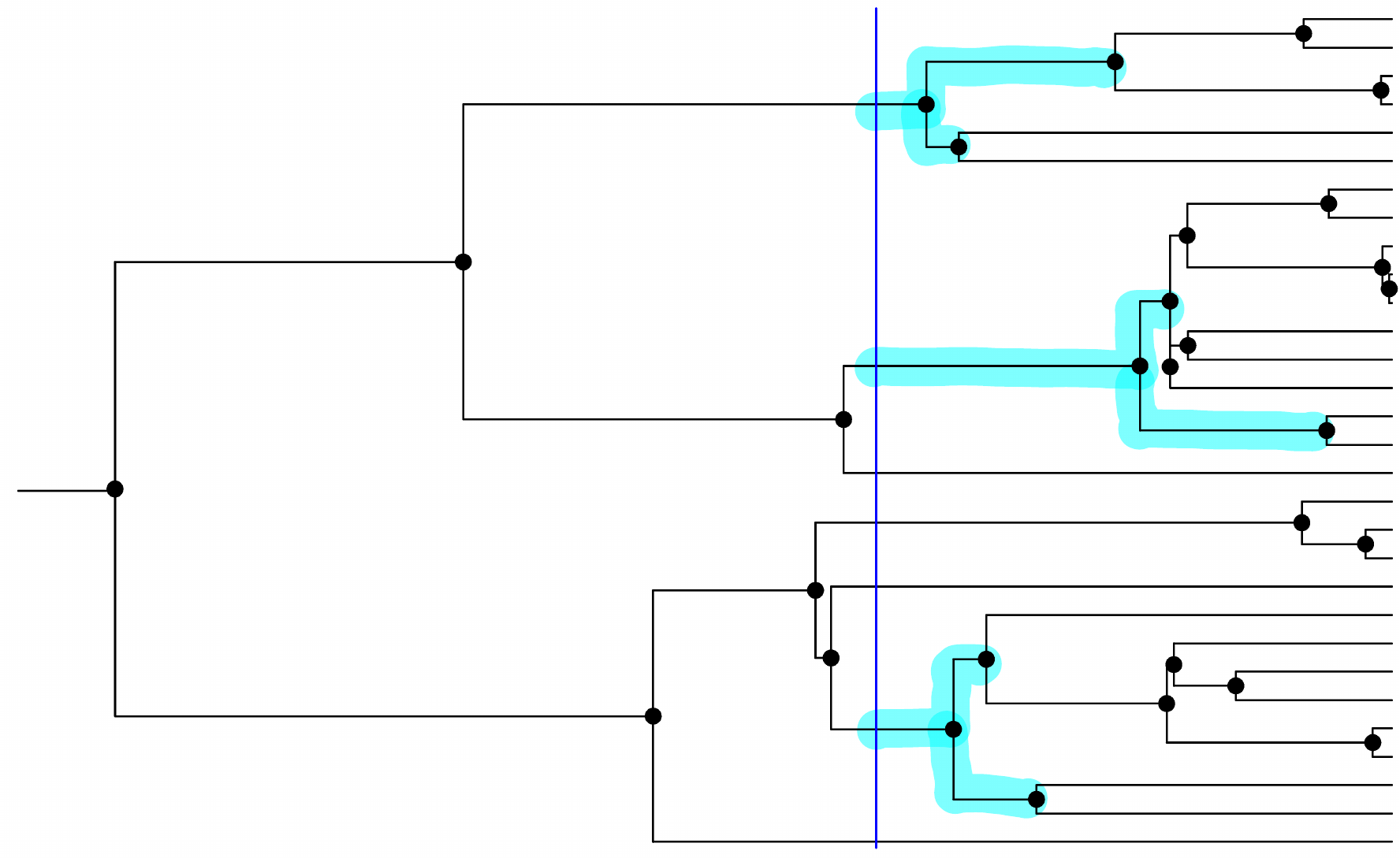}
\end{center}
\hskip 2.4in $t$ \hskip 1.3in$T$\hfill
\caption{The uncolored tree of Figure \ref{fig:f1}, of depth $T$, generated by the binary-type pure birth model. The blue line at $t$ determines several highlighted triples of edges whose lengths are possible draws from the probability distribution $G_t$ of Definition \ref{def:G} of Section \ref{sec:main}.}\label{fig:f4}
\end{figure}

Due to the conditional independence of the lengths of three edges sharing a common node, given that node's type, this distribution is a mixture of product distribution, with the mixing distribution and the components of the products closely related to distributions previously computed. This structure allows for the application of the following theorem, to obtain unmixed distributions of edge lengths conditioned on the type of the parental node. Thus even though we have no observation of type at any point in the tree, we can extract a distribution that is conditioned on type.

The following is a variant of Theorem 8 of  \cite{Allman2009}, with the hypotheses modified as discussed on p.~3116 of that paper.

\begin{theorem} \cite{Allman2009} \label{thm:mixprod}
For  $1\le i\le m$, let $$\mu_i=\prod_{k=1}^3 \mu_i^k$$ be a product of $3$ independent, absolutely continuous distributions $\mu_i^k$ on $\mathbb R$. 
With $\pi_i>0$, let $(\pi_1,\pi_2, \dots,\pi_m)$ be a distribution on $[m]$. For each $k$, suppose the
set of distributions $\{\mu_i^k\}_{i=1}^m$ has the property that every subset of  $r_k$ elements is linearly independent, and that $$r_1+r_2+r_3 \ge 2m+2.$$
Then, up to label swapping in $i$, the $\mu_i^k$ and $\pi_i$ are determined by the mixture distribution
$$P = \sum_{i = 1}^{r}\pi_i\mu_{i} = \sum_{i = 1}^{r}\pi_{i}\prod_{k = 1}^{n}\mu_i^k.$$ 
More precisely, $P$ determines distributions $\nu_i^k$ and $(p_1,p_2,\dots, p_m)$ such that for some permutation $\sigma$ of the set $[m]$,
$$\mu_i^k=\nu_{\sigma(i)}^k \text{ and } \pi_i=p_{\sigma(i)}.$$
\end{theorem}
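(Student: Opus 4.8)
The plan is to reduce this continuous statement to Kruskal's uniqueness theorem for three-way tensors by discretizing each coordinate. Recall that Kruskal's theorem guarantees that three factor matrices $M_1, M_2, M_3$, each with $m$ columns and with Kruskal ranks $k_1, k_2, k_3$ (the largest $r$ such that every $r$ columns are linearly independent) satisfying $k_1+k_2+k_3 \ge 2m+2$, are determined up to a common column permutation and rescaling by the tensor $\sum_{i=1}^m (M_1)_{\cdot i}\otimes (M_2)_{\cdot i}\otimes (M_3)_{\cdot i}$. To build the dictionary, I would fix, for each coordinate $k$, a finite measurable partition $\mathcal P_k=\{B_1^k,\dots,B_{n_k}^k\}$ of $\mathbb R$, turning each $\mu_i^k$ into the probability vector $v_i^k=(\mu_i^k(B_1^k),\dots,\mu_i^k(B_{n_k}^k))$ and assembling these as the columns of a matrix $M_k$. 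The mixture $P$ then determines the three-way array whose $(j_1,j_2,j_3)$ entry is $P(B_{j_1}^1\times B_{j_2}^2\times B_{j_3}^3)=\sum_i \pi_i\,\mu_i^1(B_{j_1}^1)\mu_i^2(B_{j_2}^2)\mu_i^3(B_{j_3}^3)$, which is exactly the tensor assembled from the $M_k$ with the weights $\pi_i$.

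The crux, and the step I expect to be the main obstacle, is a lemma translating the hypothesis into a statement about the discretized matrices: if every $r_k$-element subset of $\{\mu_i^k\}_{i=1}^m$ is linearly independent as measures, then there is a finite partition $\mathcal P_k$ for which $\operatorname{rank}_K(M_k)\ge r_k$. I would prove this by a projection argument. Identify each absolutely continuous $\mu_i^k$ with its density in $L^1(\mathbb R)$, and take a refining sequence of partitions whose generated $\sigma$-algebras increase to the Borel $\sigma$-algebra. For a fixed $r_k$-subset with densities $g_1,\dots,g_{r_k}$, the cell integrals $(\int_{B_j^k} g_i)_j$ are linearly independent exactly when the conditional expectations $\mathbb E[g_i\mid \mathcal P_k]$ are linearly independent as functions, since $\sum_i c_i \mathbb E[g_i\mid \mathcal P_k]=\mathbb E[\sum_i c_i g_i\mid \mathcal P_k]$ vanishes iff $\sum_i c_i g_i$ integrates to zero over every cell. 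Because $\mathbb E[g\mid \mathcal P_k]\to g$ in $L^1$ along the sequence, a compactness argument on the coefficient vectors shows that linear independence of the $g_i$ forces linear independence of the projections for all sufficiently fine partitions; a unit-norm dependence surviving to the limit would contradict independence of the $g_i$. As there are only finitely many $r_k$-subsets (and finitely many coordinates $k$), a single level of the sequence makes every $r_k$ columns of each $M_k$ independent.

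With partitions achieving $\operatorname{rank}_K(M_k)\ge r_k$ for all $k$ chosen, the hypothesis $r_1+r_2+r_3\ge 2m+2$ lets Kruskal's theorem recover the columns $v_i^k$ and weights $\pi_i$ up to a single permutation and rescaling; since each $v_i^k$ is a probability vector and the $\pi_i$ sum to $1$, the scaling ambiguity is eliminated, so the discretized data are determined up to a permutation $\sigma$. To return to the continuous distributions I would fix one refining sequence of partitions in all three coordinates, with the first level already achieving the rank bounds; linear independence of columns is preserved under refinement (a dependence among refined columns aggregates to one among coarse columns), so every level satisfies Kruskal's hypotheses. Each level yields a permutation $\sigma_n$; since there are finitely many permutations, some $\sigma$ recurs infinitely often, and along that subsequence the recovered cell probabilities, relabeled by $\sigma$, are automatically consistent (coarse values are sums of finer ones for the fixed true measures). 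These consistent values define $\mu_{\sigma(i)}^k$ on the algebra generated by the sequence, and since that algebra generates the Borel $\sigma$-algebra, each measure is determined on all of $\mathbb R$. This produces distributions $\nu_i^k$ and weights $p_i$ with $\mu_i^k=\nu_{\sigma(i)}^k$ and $\pi_i=p_{\sigma(i)}$, as claimed. The delicate points to nail down are the $L^1$ projection/compactness step and verifying that the normalization is inherited correctly so that no scaling freedom remains.
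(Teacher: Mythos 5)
This statement is quoted in the paper from \cite{Allman2009} without proof, and your argument reproduces the strategy of the proof given in that reference: discretize each coordinate by a finite partition chosen (via a binning lemma showing linear independence of the measures survives discretization) so that the factor matrices have Kruskal rank at least $r_k$, apply Kruskal's uniqueness theorem for three-way tensors, and then recover the measures themselves along a refining sequence of partitions. Your proposal is correct up to standard, fixable technicalities (e.g., the conditional expectations should be taken with respect to a finite reference measure equivalent to Lebesgue so that unbounded cells cause no trouble), so it is essentially the same approach as the cited proof.
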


To apply this theorem, we make a further technical assumption, denoting the vector of $1$s by $\mathbf 1$. 

\begin{assump}\label{a:nonsing}
Parameters $(\boldsymbol \lambda, S)$ are such that the  $m\times m$ matrix $$M=M(\boldsymbol \lambda,S)=\begin{pmatrix} \mathbf 1 &\  {U}\mathbf 1&\  {U}^2\mathbf 1&\ \dots&\  {U}^{m-1}\mathbf 1\end{pmatrix}$$  is non-singular.
\end{assump}

While the role of this assumption in our arguments will  be clear in our proofs of Lemma \ref{lem:indep} and Theorem \ref{thm:main} below, to understand its implications concretely, consider first the case $m=2$. Then $$U=\begin{pmatrix} -s_{12}-\lambda_1&\  s_{12}\\s_{21} &\ -s_{21}-\lambda_2\end{pmatrix},$$ so
$$M=\begin{pmatrix} 1&\  -\lambda_1\\1&\  -\lambda_2\end{pmatrix}.$$ 
The non-singularity of $M$ thus is equivalent to $\lambda_1\ne \lambda_2.$ That these speciation rates would need to be different for parameters to be identifiable is intuitively clear, since otherwise type changes governed by $S$ would have no impact on the structure of the uncolored tree.

For general $m$, Assumption \ref{a:nonsing} is equivalent to the non-vanishing of $\det M$, a degree $\sum_{i=1}^{m-1} i=\binom m2$ polynomial in the $m^2$ independent entries of $\boldsymbol \lambda, S$. Its non-vanishing thus excludes an algebraic variety of codimension 1, a set of Lebesque measure 0 in the unrestricted parameter space. An explicit calculation in the $m=3$ case shows the polynomial to be an irreducible polynomial in the $\lambda_i$ and $s_{ij}$, $i\ne j$. 

The non-vanishing of $\det M$ always requires that the vector $\boldsymbol \lambda=-U\mathbf 1$ not be a multiple of $\mathbf 1$ (so that the first two columns of $M$ are linearly independent), and hence that not all $\lambda_i$ are the same. However, the additional restrictions it imposes on the parameters are more opaque to intuition without considering special cases.  

For instance, when $m=3$, if all the $s_{ij}$ are equal, so the type switching behavior is identical for all types, the polynomial simplifies considerably, and factors as
$$(\lambda_1-\lambda_2)(\lambda_2-\lambda_3) (\lambda_3-\lambda_1).$$
Non vanishing of the polynomial, then requires that the three $\lambda_i$ be distinct, as one would expect is needed for identifiability, for otherwise several types would behave identically. However, for other choices of the $s_{ij}$, two of the $\lambda_i$ can be equal without the polynomial vanishing.

\medskip

Next, we define  the joint edge length distribution for several edges of a tree.

\begin{definition} \label{def:G} For some $t< T$, consider  the following three random variables: Sample an (uncolored) tree of depth $T$ under the MPBT model. From among the edges of the tree existing at time $t$ choose one uniformly at random. Then with $t_b\in(t,T)$, the time at which that edge speciates,  let $\ell^0_{t}=t_b-t$ denote the time interval until it speciates, and let $\ell^1_{t}$ and $\ell^{2}_{t}$, respectively denote the lengths of the immediate descendent edges (where the edges are designated 1,2  uniformly at random). Then the joint distribution of these three variables $\ell ^0_{t}$, $\ell^1_{t}$, and $\ell^2_{t}$ is
 \begin{equation*}
G_{T,t}(\tau_{0}, \tau_{1}, \tau_{2}) := \mathbb{P}\left (\ell^{0}_{t} \le \tau_{0}, \ell^{1}_{t} \le \tau_{1}, \ell^{2}_{t} \le \tau_{2}  \mid  \ell^{1}_{t}, \ell ^{2}_{t} < T-t-\ell^{0}_{t}  \right ).
\end{equation*}
We call $G_{T,t}$ the \emph{joint distribution of edge lengths around a node}.
\end{definition}

The three edge lengths used in the definition of $G_{T,t}$ are depicted in Figure \ref{fig:f4}, for $t=T/2$.  The conditioning in the definition of $G_{T,t}$ ensures it only considers edges in which the edge process has led to speciation, that is, the edge processes for the parental  and child edges are not truncated.

\begin{lemma} 
\label{lem:GGinf} Under Assumptions \ref{a:lambdapos} and \ref{a:spos},
as $T \to \infty$, the joint distribution $G_{T,T/2}$  at time $T/2$ of edge lengths around a node on a tree of depth $T$ converges to 
\begin{equation}
G_{\infty} = \sum_{i}\sum_{j} u_i P_{i_{-}, j_{+}}(\tau_0)F_{j}(\tau_1)F_{j}(\tau_2),\label{eq:Ginf}
\end{equation}
where $F_{j}$, $P_{i_{-}, j_{+}}$, and $u_{i}$ are defined in Propositions  \ref{prop:condT}, \ref{prop:asycondprob}, and Lemma \ref{lem3}, respectively.
\end{lemma}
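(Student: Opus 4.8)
The plan is to decompose $G_{T,t}$ according to the hidden type of the sampled edge at the sampling time $t$, to exploit the conditional independence of the three edge processes given their starting type in order to factor the resulting conditional distribution, and then to pass to the limit $T\to\infty$ with $t=T/2$, showing that the type frequencies converge to $\mathbf u$ while the finite-depth truncation becomes negligible.

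First I would condition on the type $i\in[m]$ carried by the chosen edge at time $t$. Since the edge is drawn uniformly from those present at time $t$, conditional on the colored tree it has type $i$ with probability exactly the relative frequency $R_t^i$. By Corollary \ref{cor4}, $R_t^i\xrightarrow{a.s.}u_i$ as $t\to\infty$, and as $0\le R_t^i\le 1$ dominated convergence gives $\mathbb E[R_t^i]\to u_i$. Choosing $t=T/2$ guarantees $t\to\infty$, so the mixing weights converge to the components $u_i$ of the normalized leading left eigenvector of $A$.

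Next, given that the sampled edge has type $i$ at time $t$, the independence of edge processes conditioned on their starting type implies that this lineage evolves as an edge process started in state $i_-$, independently of the rest of the tree. Running it forward, the parent speciates after an interval $\ell^0$ in some type $j$; in the notation of Proposition \ref{prop:asycondprob}, the finite-time transition probability $P_{i_-,j_+}(\tau_0)$ (the upper-right block of the matrix of Lemma \ref{lem:edgeTrans}) is exactly $\mathbb P(\ell^0\le\tau_0,\ \text{speciation type}=j\mid\text{type }i)$. The two descendant edges then begin as independent edge processes from state $j_-$, so by Proposition \ref{prop:condT} their lengths are i.i.d.\ with cdf $F_j$, contributing $F_j(\tau_1)F_j(\tau_2)$ (the random labeling of the children being irrelevant by symmetry). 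Summing over $j$ and then over $i$ with weights $u_i$ produces the claimed $G_\infty$.

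The step requiring the most care is the finite-depth conditioning in Definition \ref{def:G}: on a tree of depth $T$ the chosen edge and its children exist only if they speciate before time $T$, so one must condition on the parent speciating within $(t,T)$ and on $\ell^1,\ell^2<T-t-\ell^0$. I would show that each of these events has probability tending to $1$: with $t=T/2$ the remaining depth $T-t=T/2\to\infty$, and by Proposition \ref{prop:condT} the speciation time $\mathcal T$ is a.s.\ finite under Assumption \ref{a:lambdapos}, so the truncation thresholds diverge. Hence the denominator normalizing the conditional distribution tends to $1$, the truncated upper limits converge to the $\tau_k$, and the conditional distribution given type $i$ converges to $\sum_j P_{i_-,j_+}(\tau_0)F_j(\tau_1)F_j(\tau_2)$; the same fact shows the reweighted type probabilities still tend to $u_i$. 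A routine final step combines the convergence of the finitely many weights with that of the conditional distributions to obtain pointwise convergence of $G_{T,T/2}$ to $G_\infty$, which is weak convergence since $G_\infty$ is continuous.
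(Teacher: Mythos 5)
Your proposal is correct and follows essentially the same route as the paper's proof: decompose over the hidden type $i$ of the uniformly chosen edge at time $T/2$ and the speciation type $j$, use conditional independence of the three edge processes given these types to factor the distribution as $P_{i_-,j_+}(\tau_0)F_j(\tau_1)F_j(\tau_2)$, invoke Corollary \ref{cor4} (with bounded convergence) to get $\mathbb{E}[R^i_{T/2}]\to u_i$ for the mixing weights, and dismiss the finite-depth conditioning because its probability tends to $1$ as $T\to\infty$. The only differences are cosmetic (you handle the truncation after conditioning on type, the paper strips it off first as an error term $\epsilon_T$), and you are if anything slightly more explicit about the dominated-convergence and almost-sure-finiteness details.
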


\begin{proof}

Note that the event $E$ which
is conditioned upon in the definition of $G_{T,T/2}$ excludes edge lengths resulting from truncated edge processes, so that all edge lengths under consideration are in fact speciation times $\mathcal T$. Thus
\begin{align*}
\lim_{T \to \infty}G_{T,T/2}(\tau_{0}, &\tau_{1}, \tau_{2})\\
 &= \lim_{T \to \infty}\mathbb{P}(\mathcal T^0_{T/2} \le \tau_{0}, \mathcal T^{1}_{T/2} \le \tau_{1}, \mathcal T^{2}_{T/2} \le \tau_{2} \mid \mathcal T^{1}_{T/2}, \mathcal T ^{2}_{T/2} < T/2-\mathcal T^{0}_{T/2}
)\\
 &= \lim_{T \to \infty}\left ( \mathbb{P}(\mathcal T^0_{T/2} \le \tau_{0}, \mathcal T^{1}_{T/2} \le \tau_{1}, \mathcal T^{2}_{T/2} \le \tau_{2})+\epsilon_T(\tau_0,\tau_1,\tau_2)\right),
\end{align*} 
where the function $\epsilon_T$ is the difference of the conditional and non-conditional probabilities above. But since the probability of $E\to 1$ as $T\to \infty$, it  follows that $\epsilon_T\to 0$.
We henceforth focus on $\mathbb{P}(\mathcal T^{0}_{T/2} \le \tau_{0}, \mathcal T^{1}_{T/2} \le \tau_{1}, \mathcal T^{2}_{T/2} \le \tau_{2} )$ rather than $G_{T,T/2}$.

Letting $A_{i}$ denote the event that the uniformly-at-random chosen edge is of type $i$ at time $\frac{T}{2}$ and $B_{j}$ denote the event that that edge speciates in color $j$, and recalling that edge processes around a node are independent when conditioned on the type of that node, we have

\begin{align*}
\mathbb{P}(\mathcal T^{0}_{T/2} \le \tau_{0}, &\mathcal T^{1}_{T/2} \le \tau_{1}, \mathcal T^{2}_{T/2} \le \tau_{2})\\& =
\sum_{i}\sum_{j}\mathbb{P}(\mathcal T^{0}_{T/2} \le \tau_{0}, \mathcal T^{1}_{T/2} \le \tau_{1}, \mathcal T^{2}_{T/2} \le \tau_{2} \ \vert \ A_{i}, B_{j})\mathbb{P}(A_{i}, B_{j}) \\
&=\sum_{i}\sum_{j}\mathbb{P}(\mathcal T^{0}_{T/2} \le \tau_{0}, B_{j} \ \vert \ A_{i})\mathbb{P}(\mathcal T^{1}_{T/2} \le \tau_{1} \ \vert \ B_{j})\mathbb{P}(\mathcal T^{2}_{T/2} \le \tau_{2} \ \vert \ B_{j})\mathbb{P}(A_{i}) \\
&=\sum_{i}\sum_{j}P_{i_{-}, j_{+}}(\tau_{0})F_{j}(\tau_{1})F_{j}(\tau_{2})\mathbb{P}(A_{i}). 
\end{align*}
In this last expression, the only dependence on $T$ is in $\mathbb P(A_i)$. But by  Corollary \ref{cor4}, $\mathbb{P}(A_{i})=\mathbb E [R^i_{T/2}] \to u_i$  as $T\to \infty$, yielding equation
\eqref{eq:Ginf}.
\end{proof}

\begin{rmk}
While the specific time $T/2$ is used in this Lemma, our arguments would be essentially unchanged if this were replaced by any function $f(T)$  with $f(T)$ and $T-f(T)\to \infty$ as $T\to \infty$.
\end{rmk}

This immediately gives that $G_\infty$ is a finite mixture of product distributions.

\begin{cor} 
\label{cor:Ginf}
The asymptotic joint distribution of edge lengths around a node, $G_{\infty}$ can be expressed as a $m$-component mixture of products of $3$ univariate distributions:  $$G_{\infty} = \sum_{j = 1}^{2}\pi_{j}\prod_{k = 1}^{3}\mu^{k}_{j},$$ where $\pi_{j} = \sum_{i}P_{i_{-}, j_{+}}u_{i}$, $\mu_{j}^{1} = \frac{\sum_{i}P_{i_{-}, j_{+}}(\tau)u_{i}}{\sum_{i}P_{i_{-}, j_{+}}u_{i}}$, $\mu_{j}^{2} = \mu_{j}^{3} = F_{j}(\tau)$, and $P_{i_{-}, j_{+}}$ is as defined in Proposition \ref{prop:asycondprob}.
\end{cor}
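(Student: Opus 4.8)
The plan is to prove this by a direct algebraic regrouping of the double sum in \eqref{eq:Ginf}, collecting terms according to the speciation color $j$ rather than treating the index pair $(i,j)$ jointly. Starting from
$$G_\infty = \sum_{i}\sum_{j} u_i\, P_{i_{-}, j_{+}}(\tau_0)\,F_{j}(\tau_1)\,F_{j}(\tau_2),$$
I would move the sum over $j$ to the outside and observe that, for each fixed $j$, the factor $F_j(\tau_1)F_j(\tau_2)$ carries no dependence on $i$ and so pulls out of the inner sum. This yields $G_\infty = \sum_{j}\bigl(\sum_{i} u_i P_{i_-,j_+}(\tau_0)\bigr)F_j(\tau_1)F_j(\tau_2)$, and the remaining task is purely to normalize the $\tau_0$-factor into a legitimate univariate distribution.

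Next I would introduce the mixing weights $\pi_j := \sum_i P_{i_-,j_+}\,u_i$, using the asymptotic transition probabilities $P_{i_-,j_+}=\lim_{\tau\to\infty}P_{i_-,j_+}(\tau)$ of Proposition \ref{prop:asycondprob}, and set $\mu_j^1(\tau_0):=\pi_j^{-1}\sum_i P_{i_-,j_+}(\tau_0)\,u_i$ together with $\mu_j^2=\mu_j^3=F_j$. Each summand is then exactly $\pi_j\,\mu_j^1(\tau_0)\,\mu_j^2(\tau_1)\,\mu_j^3(\tau_2)$, giving the claimed form $G_\infty=\sum_j \pi_j\prod_{k=1}^3\mu_j^k$, with the index $j$ ranging over all of $[m]$.

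The substantive content is to confirm that these objects are genuine distributions. For the weights I would check that $\pi_j\ge 0$ (immediate, since each $P_{i_-,j_+}\ge 0$ and each $u_i>0$ by Lemma \ref{lem3}) and that they sum to one: $\sum_j\pi_j=\sum_i u_i\sum_j P_{i_-,j_+}=\sum_i u_i=1$, where $\sum_j P_{i_-,j_+}=1$ is precisely the almost-sure finiteness of the speciation time $\mathcal T$ (Proposition \ref{prop:condT} under Assumption \ref{a:lambdapos}) and $\sum_i u_i=1$ is the normalization of the leading left eigenvector. To legitimize the division defining $\mu_j^1$ I would note $\pi_j>0$: since $\lambda_j>0$ there is positive probability of speciating while still in type $j$, so $P_{j_-,j_+}>0$ and hence $\pi_j\ge u_j P_{j_-,j_+}>0$. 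Finally, $\mu_j^1$ is a valid cdf because it is a non-negatively weighted sum of the non-decreasing, right-continuous transition functions $P_{i_-,j_+}(\cdot)$, vanishes at $\tau_0=0$, and tends to $\pi_j^{-1}\sum_i u_i P_{i_-,j_+}=1$ as $\tau_0\to\infty$; and $\mu_j^2=\mu_j^3=F_j$ are cdfs by Proposition \ref{prop:condT}.

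Because the result is a rearrangement followed by elementary checks, there is no real obstacle; the only point requiring a moment's care is verifying $\sum_j\pi_j=1$, which is exactly where the almost-sure finiteness of the speciation time (rather than mere existence of the limiting probabilities) is genuinely used.
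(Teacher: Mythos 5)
Your proof is correct and follows essentially the same route as the paper, which states the corollary as an immediate regrouping of the double sum in Lemma \ref{lem:GGinf} with no further argument given. Your additional verifications---that $\sum_j \pi_j = 1$ via the almost-sure finiteness of $\mathcal{T}$, that $\pi_j > 0$ so the normalization is legitimate, and that $\mu_j^1$ is a genuine cdf---are exactly the details the paper treats as obvious, and you supply them correctly.
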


In order to apply Theorem \ref{thm:mixprod} to $G_\infty$, we need to verify that some of the univariate distributions in its decomposition above are linearly independent. To do so, the following lemma is needed.

We now introduce an additional assumption, which holds for generic parameters.

\begin{assump} \label{a:notequal} The speciation parameters satisfy $\lambda_i\ne \lambda_j$ for all $i\ne j$.
\end{assump}

\begin{lemma} 
\label{lem:indep} Suppose Assumption \ref{a:lambdapos},\ref{a:spos}, \ref{a:nonsing}, and \ref{a:notequal} hold, and consider the sets of univariate distributions $\{\mu^{k}_{j}\}_{j=1}
^m$  defined in Corollary $\ref{cor:Ginf}$.
For $k = 1$, every pair of  functions in this set is linearly independent, while for $k=2,3$ the full set is linearly independent.
\end{lemma}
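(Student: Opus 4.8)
The plan is to handle the two claims separately, and in each case to exploit the explicit matrix-exponential formulas already derived. The payoff is that these independence statements furnish the values $r_1=2$ and $r_2=r_3=m$, so that the hypothesis $r_1+r_2+r_3\ge 2m+2$ of Theorem \ref{thm:mixprod} holds with equality.

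I would dispose of the factors $k=2,3$ first, since there $\mu_j^2=\mu_j^3=F_j$. By Proposition \ref{prop:condT} each $F_j(\tau)=1-(\exp(U\tau)\mathbf 1)_j$ is real-analytic on $[0,\infty)$. Suppose $\sum_j c_j F_j\equiv 0$ and write $\mathbf c=(c_1,\dots,c_m)$. Differentiating $n$ times at $\tau=0$ annihilates the constant and leaves $\mathbf c^{\top}U^n\mathbf 1=0$ for every $n\ge 1$, since $\tfrac{d^n}{d\tau^n}\exp(U\tau)\big|_0=U^n$. Separately, because the eigenvalues of $U$ have negative real part (Lemma \ref{lem:eigs}), every $F_j\to 1$ as $\tau\to\infty$, so the vanishing combination forces $\sum_j c_j=\mathbf c^{\top}\mathbf 1=0$, which is exactly the $n=0$ condition. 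Thus $\mathbf c^{\top}U^n\mathbf 1=0$ for $n=0,1,\dots,m-1$, i.e. $\mathbf c^{\top}M=\mathbf 0$ for the matrix $M$ of Assumption \ref{a:nonsing}. Its non-singularity gives $\mathbf c=\mathbf 0$, so $\{F_j\}_{j=1}^m$ is linearly independent.

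For $k=1$ the first task is a usable closed form. Using $P_{-,+}(\tau)=(\exp(U\tau)-I)U^{-1}D$ and $P_{-,+}=-U^{-1}D$ from Proposition \ref{prop:asycondprob}, the common column factor $\lambda_j$ of $D$ cancels between numerator and denominator of $\mu_j^1$, and with $\mathbf z^{\top}:=\mathbf u^{\top}U^{-1}$ (and commutativity of $\exp(U\tau)$ with $U^{-1}$) I obtain $\mu_j^1(\tau)=1-(\mathbf z^{\top}\exp(U\tau))_j/z_j$, where $z_j\ne 0$ because $\pi_j=-\lambda_j z_j>0$. Each $\mu_j^1$ is a cdf running from $0$ to $1$, so a pair is linearly dependent precisely when $\mu_{j_1}^1\equiv\mu_{j_2}^1$. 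Matching Taylor coefficients under that assumption yields $(\mathbf z^{\top}U^n)_{j_1}/z_{j_1}=(\mathbf z^{\top}U^n)_{j_2}/z_{j_2}$ for all $n$. The decisive simplifications are $\mathbf z^{\top}U=\mathbf u^{\top}$, so the $n=1$ relation reads $u_{j_1}/z_{j_1}=u_{j_2}/z_{j_2}=:\kappa$ with $\kappa\ne 0$ (as $\mathbf u>0$ by Lemma \ref{lem3}), together with $\mathbf z^{\top}U^2=\mathbf u^{\top}U=\omega\mathbf u^{\top}-2\mathbf u^{\top}D$ (from $U=A-2D$ and $\mathbf u^{\top}A=\omega\mathbf u^{\top}$), whose $j$-th entry is $u_j(\omega-2\lambda_j)$. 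The $n=2$ relation then collapses to $\kappa(\omega-2\lambda_{j_1})=\kappa(\omega-2\lambda_{j_2})$, forcing $\lambda_{j_1}=\lambda_{j_2}$, which contradicts Assumption \ref{a:notequal}. Hence every pair $\{\mu_{j_1}^1,\mu_{j_2}^1\}$ is linearly independent.

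The main obstacle is the $k=1$ case. Its difficulty is not length but discovery: one must spot that the $\lambda_j$ factor cancels so that everything reduces to the single vector $\mathbf z^{\top}=\mathbf u^{\top}U^{-1}$, and that only the first two nontrivial Taylor coefficients are needed, with the left-eigenvector identity $\mathbf u^{\top}U=\omega\mathbf u^{\top}-2\mathbf u^{\top}D$ converting the obstruction exactly into the inequality $\lambda_{j_1}\ne\lambda_{j_2}$. By contrast the $\{F_j\}$ argument is routine once the power-series-versus-limit dichotomy is set up.
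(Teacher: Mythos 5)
Your proof is correct and takes essentially the same approach as the paper's: for $k=2,3$ it matches Taylor coefficients of $\mathbf 1-\exp(U\tau)\mathbf 1$ against the non-singular matrix $M$ of Assumption \ref{a:nonsing}, and for $k=1$ it uses the order-one and order-two coefficients together with $U=A-2D$, $\mathbf u^{\top}A=\omega\mathbf u^{\top}$, and $\lambda_{j_1}\ne\lambda_{j_2}$, your cancellation of $\lambda_j$ and reduction of pairwise dependence to equality of cdfs being an equivalent repackaging of the paper's $2\times 2$ determinant computation on the unnormalized functions $\nu_j=\pi_j\mu_j^1$. One small point in your favor: you explicitly derive the $n=0$ relation $\mathbf c^{\top}\mathbf 1=0$ from the $\tau\to\infty$ limit, a step the paper's $k=2$ argument leaves implicit when it asserts $\mathbf c^{\top}M=\mathbf 0$.
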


\begin{proof} 

Since $\{\mu^{2}_{j}\}_{j} = \{\mu^{3}_{j}\}_{j}$, we need only consider the cases $k = 1, 2$. 

Consider first the case $k = 2$. Consider the vector $F$ of functions $\mu^2_j=F_j$. Then by Proposition \ref{prop:condT},
$$F= \mathbf{1} - \exp({U\tau})\mathbf{1}.$$
Suppose $\mathbf c^T F=0$ for some vector $\mathbf c$.
Since $\frac {d^n} {d\tau^n} F (0)=-U^n \mathbf{1}$, it follows that $\mathbf c^TM=\mathbf 0$ where $M$ is defined in Assumption \ref{a:nonsing}. Since $M$ is non-singular, $\mathbf c=\mathbf 0$, so  the entries of $F$ are independent.

For $k = 1$,
it is enough to show the independence of each pair of functions $$\nu_j(\tau)=(\sum_{i}P_{i_{-}, j_{+}}  u_{i}) \mu^1_j =\sum_{i}P_{i_{-}, j_{+}}(\tau)u_{i}.$$ 
From Lemma \ref{lem:edgeTrans} the vector $G$ of all $\nu_j$ is given by
$$G(\tau)^T =\mathbf u^T \sum_{n=1}^\infty \frac 1{n!} U^{n-1}D\tau^n.$$
Suppose $G(\tau)^T\mathbf c =0$ for some vector $\mathbf c$.
Since $\frac {d^n} {d\tau^n} G(0)^T=\mathbf u^T U^{n-1}D$, it follows that
$$ \mathbf u^T U^{n-1}D \mathbf c=0 \text{ for } n\ge 1. $$
In particular, for $n=1$ we find $\mathbf u^TD\mathbf c=0$. For $n=2$, since 
$U=A-2D$ and $\mathbf u^TA= \omega \mathbf u^T$, we have $$\mathbf u^T UD\mathbf c=\mathbf u^T(\omega I-2D)D\mathbf c=0.$$
To show every pair of the $\nu_j$s is independent, consider $\mathbf c$ all of whose entries except possibly two are zero. Without loss of generality suppose the exceptions are $c_1,c_2$. Then the $n=1,2$ equations become
$$\begin{pmatrix}
u_1 \lambda_1 & u_2 \lambda _2\\
u_1(\omega-2\lambda_1)\lambda_1 & u_2(\omega-2\lambda_2)\lambda_2
\end{pmatrix}
 \begin{pmatrix}c_1\\c_2\end{pmatrix} =\mathbf 0
$$
Using $u_1,u_2,\lambda_1,\lambda_2 >0, \lambda_1\ne \lambda_2$, computing the determinant of this matrix shows it is non-singular, and hence $c_1=c_2=0$.

\end{proof}

We now arrive at our main result.

\begin{theorem}
\label{thm:main} Under the explicit generic  Assumptions  \ref{a:lambdapos},  \ref{a:spos} \ref{a:nonsing},  and \ref{a:notequal},
the parameters $(\boldsymbol \lambda, S)$ of the uncolored Multitype Pure-birth Tree model are identifiable up to label swapping from the asymptotic distribution $G_\infty$ of edge lengths around a node.
\end{theorem}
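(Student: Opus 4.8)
The plan is to recover $(\boldsymbol\lambda, S)$ in two stages: first apply the mixture-decomposition theorem to $G_\infty$ to peel apart the product components up to relabeling, and then reconstruct $U$, hence $\boldsymbol\lambda$ and $S$, from the recovered marginal distributions $F_j$ alone. By Corollary \ref{cor:Ginf}, $G_\infty = \sum_{j=1}^m \pi_j \prod_{k=1}^3 \mu_j^k$ is exactly the kind of finite mixture of product distributions to which Theorem \ref{thm:mixprod} applies, so the first stage reduces to checking that theorem's hypotheses.

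To verify those hypotheses I would invoke Lemma \ref{lem:indep}: the factors $\{\mu_j^1\}_j$ are pairwise linearly independent and the identical sets $\{\mu_j^2\}_j = \{\mu_j^3\}_j$ are fully linearly independent, so one may take $r_1 = 2$ and $r_2 = r_3 = m$, giving $r_1 + r_2 + r_3 = 2m+2$ and meeting the required inequality with equality. Absolute continuity of each $\mu_j^k$ is clear, since $F_j$ and $\mu_j^1$ are smooth functions of $\tau$ by Lemma \ref{lem:edgeTrans}. Positivity of the weights $\pi_j = \sum_i P_{i_-, j_+} u_i$ follows from $u_j > 0$ (Lemma \ref{lem3}) together with $P_{j_-, j_+} > 0$, the latter because, starting in state $j_-$, the edge speciates before switching type with the positive probability $\lambda_j/(\lambda_j + \sum_{k\ne j} s_{jk})$ under Assumptions \ref{a:lambdapos} and \ref{a:spos}. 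Theorem \ref{thm:mixprod} then recovers the functions $F_j = \mu_j^2$ (together with the $\mu_j^1$ and $\pi_j$) up to a single permutation $\sigma$ of the index $j$.

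The second stage extracts the parameters from the recovered $F_j$. Each $F_j$ determines all of its derivatives at $\tau = 0$, and Proposition \ref{prop:condT} gives $F_j^{(n)}(0) = -(U^n\mathbf 1)_j$; assembling these over $j$ recovers every vector $U^n\mathbf 1$, $n \ge 0$. The case $n=1$ gives $U\mathbf 1 = -D\mathbf 1 = -\boldsymbol\lambda$ (using $S\mathbf 1 = \mathbf 0$), so $\boldsymbol\lambda$ and $D$ are determined. To recover $U$ itself I would use Assumption \ref{a:nonsing}: the matrix $M = (\mathbf 1\ \ U\mathbf 1\ \cdots\ U^{m-1}\mathbf 1)$ is invertible, and writing $M' = (U\mathbf 1\ \ U^2\mathbf 1\ \cdots\ U^m\mathbf 1)$ we have $UM = M'$, whence $U = M'M^{-1}$ is determined by the recovered moment vectors. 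Then $S = U + D$, completing the reconstruction.

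It remains to identify the residual ambiguity with label swapping. Replacing each $F_j$ by $F_{\sigma(j)}$ permutes every vector $U^n\mathbf 1$ by $\sigma$, hence replaces $M, M'$ by $PM, PM'$ for the associated permutation matrix $P$, and therefore the reconstructed matrices by $PUP^{-1}$ and $PDP^{-1}$; thus the recovered $(\boldsymbol\lambda, S)$ differs from the true one only by the simultaneous relabeling of types induced by $\sigma$, which is precisely the claimed symmetry. The main obstacle I anticipate lies in this second stage: reconstructing $U$ from the moment vectors $U^n\mathbf 1$ is exactly where Assumption \ref{a:nonsing} is essential, and one must take care to confirm that the abstract permutation delivered by Theorem \ref{thm:mixprod} is genuinely realized as conjugation by a permutation matrix at the level of the parameters $(\boldsymbol\lambda, S)$, and not merely as a reindexing of the recovered functions.
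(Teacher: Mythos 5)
Your proposal is correct and follows essentially the same route as the paper's proof: both apply Theorem \ref{thm:mixprod} via Corollary \ref{cor:Ginf} and Lemma \ref{lem:indep} to recover the $F_j$ up to a permutation, then use the Taylor coefficients $F^{(n)}(0)=-U^n\mathbf 1$ together with the invertibility of $M$ (Assumption \ref{a:nonsing}) and the identity $UM = \begin{pmatrix}U\mathbf 1 & \cdots & U^m\mathbf 1\end{pmatrix}$ to pin down $U$, and finally $\boldsymbol\lambda=-U\mathbf 1$, $S=U+D$ to conclude agreement up to conjugation by a permutation matrix. Your constructive phrasing (rebuilding $U=M'M^{-1}$ from one distribution) and the paper's comparative phrasing (showing two parameter sets yield $U^*\Sigma=\Sigma U$) are equivalent, and your explicit checks of the mixture theorem's hypotheses ($r_1+r_2+r_3=2m+2$ and positivity of the weights $\pi_j$) are a welcome addition that the paper leaves implicit.
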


\begin{proof} Suppose two parameter choices, $(\boldsymbol \pi, \boldsymbol \lambda, S)$ and $(\boldsymbol \pi^*,\boldsymbol \lambda^*, S^*)$,  induce the same asymptotic distribution $G_\infty$.  Denoting the various distributions of conditional branching times, asymptotic transition probabilities, eigenvectors of matrices, etc. associated to parameters  $(\boldsymbol \pi, \boldsymbol \lambda, S)$ as earlier in this work, we use the same notation with a ``$*$'' appended to denote the corresponding entities associated to parameters $(\boldsymbol \pi^*,\boldsymbol \lambda^*, S^*)$.

By Theorem  \ref{thm:mixprod}, Corollary \ref{cor:Ginf}, and Lemma \ref{lem:indep} the distributions $\pi_{i}, \mu_{i}^{k}$, for $1 \le i \le r$, $1 \le k \le 3$ are determined from $G_{\infty} = G^*_{\infty}$, up to label swapping in $i$. Thus $F^*_{i}(\tau) = F_{\sigma(i)}(\tau)$  for some permutation $\sigma$. 

Using Proposition \ref{prop:condT} the equations $F^*_{i}(\tau) = F_{\sigma(i)}(\tau)$ for all $j$ can be represented  in matrix form as 
\begin{equation}
\mathbf 1 - e^{{U}^*\tau}\mathbf 1 = \Sigma(\mathbf 1 - e^{{U}\tau}\mathbf 1), 
\end{equation} 
where $\Sigma$ is the permutation matrix representing $\sigma$. Equating coefficients of the MacLauren series yields for $n=1,2,3,\dots$ that
\begin{equation} ({U}^*)^n\mathbf 1=\Sigma {U}^n\mathbf 1.\label{eq:SS}\end{equation}

Using equation \eqref{eq:SS} and the definition of $M,M^*$ in Assumption \ref{a:nonsing} shows
\begin{equation}M^*=\Sigma M.\label{eq:S1S1}\end{equation}
Equation \eqref{eq:SS} further implies
\begin{align*}{U}^* M^*&=\begin{pmatrix} {U}^*\mathbf 1 &\  ({U}^*)^2\mathbf 1&\  ({U}^*)^3\mathbf 1&\ \dots&\  ({U}^*)^{r}\mathbf 1\end{pmatrix}\\
&=\begin{pmatrix} \Sigma {U}\mathbf 1 &\  \Sigma {U}^2\mathbf 1&\  \Sigma {U}^3\mathbf 1&\ \dots&\  \Sigma {U}^{r}\mathbf 1\end{pmatrix}\\
&=\Sigma {U} M.
\end{align*}
Using equation \eqref{eq:S1S1} then yields
$$  {U}^* \Sigma M=\Sigma {U} M,$$
and since $M$ is non-singular,
$${U}^*\Sigma =\Sigma {U}.$$
  
 Since $U=S-D$ and each row of $S$ adds to 0, multiplying the last equation by $\mathbf 1$ on the right
 gives $\boldsymbol \lambda^*=\Sigma \boldsymbol \lambda.$ Since this implies $D^*\Sigma=\Sigma D$, it follows
 that $S^*=\Sigma S \Sigma^T$ as well.
 Thus the parameters differ only up to label swapping.
  \end{proof}

\begin{rmk}\label{rmk:1tree}
Theorem \ref{thm:main} establishes that an asymptotic distribution, as tree depth $\to \infty$ associated to the MPBT model yields parameter identifiability. This suggests that with a sample of many trees of arbitrarily large size, there is potential for statistically consistent inference, where ``consistency'' would mean as both the number of trees and the tree depth go to infinity. However, this is not the framework in which data analysis with this model is performed, since while a tree may be large, only one tree observation is available \cite{Maddison2007}.

Fortunately, a minor modification to the proofs above again yields identifiability of parameters from an asymptotic distribution derived from a single observation, as the depth of the tree goes to infinity. Indeed, modify Definition \ref{def:G}  so that $G_t$ is the distribution of edge lengths around a node from single growing tree.  The proof of Lemma \ref{lem:GGinf}, then, is modified only in its last line, as $\mathbb P(A_i)=R^i_{T/2}$, a random variable rather than its expected value. Nonetheless, by Corollary \ref{cor4}, 
we again find $\mathbb P(A_i)\to u_i$, so the conclusion is unchanged.
\end{rmk}

\section{Discussion}\label{sec:discuss}

Theorem \ref{thm:main}, and Remark \ref{rmk:1tree}, show that parameters $(\boldsymbol \lambda, S)$ of the MPBT model can be identified from an asymptotic distribution as the tree depth grows, whether or not the number of sampled trees grows.
Although this is not sufficient to conclude that estimation of parameters by maximum likelihood (ML) from  a single tree, as suggested in \cite{Maddison2007}, is statistically consistent, it does at least indicate that is a possibility. A similar question on ML inference of parameters for a hidden Markov model from a single sequence of observations was addressed by \cite{Leroux1992}, with the consistency of ML estimation established as the sequence length goes to infinity.

For applications, it would be highly desirable to extend our identifiability result to a model incorporating constant extinction rates for each type.
In most biological settings, the obtainable ``data," however is not the tree with edges stopping at extinction events, but rather the pruned tree in which all edges with no extant descendants are removed.  

For a single type, parameter identifiability of a model with pruning was essentially considered in \cite{Nee1994}, where it was shown that the lineages-through-time function's rate of change allowed the speciation and extinction rates to be determined, by separately considering the time regimes much earlier than the tree tips, and near the tree tips. An analysis combining the insight from \cite{Nee1994} with the 
mixture distribution framework used in this work might be successful in showing parameters can be recovered from a single large tree observation for the multitype model.

Another interesting identifiability question for multitype tree models concerns what information on parameters is contained in the tree topology alone, or from weaker metric information than precise branch lengths. While our analysis depends heavily on metric features of the tree, that of \cite{Popovic2016} required no metric information. However,  it did use type observations at the tips of the tree, and at their parental nodes. While types at tree tips may be observed in some biological studies, types of the parental nodes are generally not observable, as data is generally collected only from the taxa extant at the present. Even if ancient DNA or other trait data from earlier times is available, it is unlikely to be from the time of the last speciation. 

\begin{funding}
ESA and JAR  were supported in part by NSF Grant DMS-2051760.
\end{funding}

\bibliographystyle{imsart-number}
\bibliography{Diversification}                         

\end{document}